% SIAM Article Template
%\documentclass[review]{siamonline1116}
\documentclass{article}
\pdfoutput=1
%\documentclass{siamonline1116}

% Information that is shared between the article and the supplement
% (title and author information, macros, packages, etc.) goes into
% ex_shared.tex. If there is no supplement, this file can be included
% directly.

\usepackage{amsmath,amssymb,graphicx,bbm}
\usepackage{amsthm,verbatim}
\usepackage{mathrsfs,mathtools}
\usepackage[footnotesize,bf]{caption}
\usepackage[left=1.25in,right=1.25in,top=1in]{geometry}
\usepackage{hyperref}

% Revision macros
\newcommand{\edits}[1]{{#1}}
\renewcommand{\edits}[1]{#1}
\newcommand{\editsbrittle}[1]{{#1}}
\renewcommand{\editsbrittle}[1]{#1}

% SIAM Shared Information Template
% This is information that is shared between the main document and any
% supplement. If no supplement is required, then this information can
% be included directly in the main document.

% Packages and macros go here
\usepackage{lipsum}
\usepackage{amsfonts}
\usepackage{graphicx}
\usepackage{epstopdf}
\usepackage{algorithm}
\usepackage{algpseudocode}
\ifpdf
  \DeclareGraphicsExtensions{.eps,.pdf,.png,.jpg}
\else
  \DeclareGraphicsExtensions{.eps}
\fi

%strongly recommended
%\numberwithin{theorem}{section}

% Declare title and authors, without \thanks
\newcommand{\TheTitle}{Allocation strategies for high fidelity models in the multifidelity regime}

% Sets running headers as well as PDF title and authors
%\headers{\TheTitle}{\TheAuthors}

% Title. If the supplement option is on, then "Supplementary Material"
% is automatically inserted before the title.
\title{{\TheTitle}\thanks{D.J.~Perry and R.M.~Kirby acknowledge that their part of this research was sponsored by ARL under Cooperative Agreement Number W911NF-12-2-0023. A.~Narayan is partially supported by AFOSR FA9550-15-1-0467 and DARPA EQUiPS N660011524053. R.~Whitaker acknowledges
support from DARPA TRADES HR0011-17-2-0016.
The views and conclusions contained in this document are those of the authors and should not be interpreted as representing the official policies, either expressed or implied, of ARL, DARPA or the U.S. Government. The U.S. Government is authorized to reproduce and distribute reprints for Government purposes notwithstanding any copyright notation herein. 
 }}

% Authors: full names plus addresses.
\author{%
Daniel J. Perry\thanks{School of Computing and SCI Institute, University of Utah, Salt Lake City, UT
    (\url{dperry@cs.utah.edu}),}
  \and
Robert M. Kirby\thanks{School of Computing and SCI Institute, University of Utah, Salt Lake City, UT
    (\url{kirby@cs.utah.edu}),
    }
	\and
Akil Narayan\thanks{Department of Mathematics and SCI Institute, University of Utah, Salt Lake City, UT
    (\url{akil@sci.utah.edu}),
		}
	\and
Ross T. Whitaker\thanks{School of Computing and SCI Institute, University of Utah, Salt Lake City, UT
    (\url{whitaker@cs.utah.edu}),
		}
}

\usepackage{amsopn}

%%% Local Variables: 
%%% mode:latex
%%% TeX-master: "ex_article"
%%% End: 

%\usepackage{algorithm}
%\usepackage{algorithmic}

\usepackage{amsfonts}
\usepackage{graphicx}

\usepackage{color}

%\newtheorem{theorem}{Theorem}[section]
%\newtheorem{lemma}{Lemma}[section]
%\newtheorem{fact}{Fact}[section]
%\newtheorem{corollary}{Corollary}[section]
%\newtheorem{problem}{Problem}[section]
%\newtheorem{definition}{Definition}

%\mathbb{R} % real number set symbol
\newcommand{\R}{\mathbb{R}} % is the set of real numbers.
\newcommand{\N}{\mathbb{N}} % is the set of natural numbers,
 % is the set of integers
 % is the set of rational numbers

 % is the set of feature vectors

\newcommand{\BigO}{\mathcal{O}} % is the big O notation for complexity
 % trace
%\newcommand{\diag}{\textbf{diag}} % diagonal
 % rank

\usepackage{mathdefs}
\usepackage{mathtools}

% Optional PDF information
%\ifpdf
%\hypersetup{
%  pdftitle={\TheTitle},
%  pdfauthor={\TheAuthors}
%}
%\fi

% The next statement enables references to information in the
% supplement. See the xr-hyperref package for details.

%\externaldocument{ex_supplement}

% FundRef data to be entered by SIAM
%<funding-group>
%<award-group>
%<funding-source>
%<named-content content-type="funder-name"> 
%</named-content> 
%<named-content content-type="funder-identifier"> 
%</named-content>
%</funding-source>
%<award-id> </award-id>
%</award-group>
%</funding-group>

% For roman subsripts
\newcommand{\romg}{\mathrm{g}}
\newcommand{\romd}{\mathrm{d}}

% Second round revision macros:
\newcommand{\eedits}[1]{{#1}}

\begin{document}

\maketitle

% REQUIRED
\begin{abstract}
We propose a novel approach to allocating resources for expensive simulations of high fidelity models when used in a multifidelity framework. %for high fidelity proxy reconstruction.
Allocation decisions that distribute computational resources across several simulation models become extremely important in situations where only a small number of expensive high fidelity simulations can be run.  %, consequently the choice in allocation algorithm is crucial. 
We identify this allocation decision as a problem in optimal subset selection, and subsequently regularize this problem so that solutions can be computed. Our regularized formulation yields a type of group lasso problem \eedits{that has been studied in the literature to accomplish subset selection}. Our numerical results compare performance of \eedits{algorithms that solve the group lasso problem} for algorithmic allocation against a variety of other strategies, including those based on classical linear algebraic pivoting routines and those derived from more modern machine learning-based methods. We demonstrate on well known synthetic problems and more difficult real-world simulations that \eedits{this group lasso} solution to the relaxed optimal subset selection problem performs better than the alternatives. %We motivate this best choice with theoretical justification and finally show results on real world simulations.

\end{abstract}

% REQUIRED
%\begin{keywords}
%  \textbf{multifidelity models, column subset selection, mixed norm regularization}
%\end{keywords}
%
%% REQUIRED
%\begin{AMS}
%	\textbf{65D05}
%\end{AMS}

\section{Introduction}
A persistent challenge in uncertainty quantification is estimating the effect of uncertain parameters on quantities of interest.
A common approach to understanding the effect of a parameter is to evaluate an ensemble of simulations for various parameter values.
This approach is reasonable when multiple runs of a simulation model or experiment are easily obtained.
Unfortunately, a simulation model or discretization that is more true-to-life, or has \emph{higher fidelity}, requires additional computational resources due to, for example, increased number of discrete elements or more expensive modeling of complex phenomena.
For these reasons a \emph{high-fidelity} model simulation can incur significant computation cost, and repeating such a simulation for a sufficient number of times to understand parameter effects can quickly become infeasible.

Recent work has introduced an effective solution to this problem by using multiple fidelities of simulation models where less-expensive, lower-fidelity versions of the high-fidelity model are used to learn the parametric structure of a simulation.
This structure is utilized to choose a small parameter ensemble at which high-fidelity runs are assembled, providing insight into the finer details and effects of the simulation parameters \cite{narayan2013stochastic, zhu2014computational}.
One important decision in this allocation process is which parameter values should be used in the less costly low- and medium-fidelity simulations, and which values are worth the more costly high-fidelity simulation.
This decision becomes extremely important for situations where it is physically impossible to run the high-fidelity simulation more than a small number of times, say $\mathcal{O}(10)$ times.

By assuming the simulation realizations are elements in a Hilbert space, previous work uses a Gram matrix associated to a parameter ensemble to learn the parametric structure. Standard linear algebra tools, such as the Cholesky decomposition, are utilized to rank ``important" parameter values where high-fidelity simulations are run.

The multifidelity situation is special among subset selection problems since we cannot add additional elements to a high-fidelity subset due to the significant marginal cost of additional computation.  Thus choosing the \emph{best} subset becomes quite critical in order to effectively utilize a very small number of high-fidelity simulations. While the classical linear algebraic approaches mentioned previously perform reasonably well, these approaches ignore the extensive work done in subset selection in a more general context in both the data analysis and machine learning literature \cite{cheng2005on,boutsidis2009improved,bach2013sharp,boutsidis2014near,papailiopoulos2014provable,altschuler2016greedy}.

Our main contributions in addressing this problem are as follows:
\begin{itemize}
  \item Our work includes substantial experimental comparisons with a broad range of subset selection algorithms. These include classical methods such as randomized sampling, single-layer Gaussian processes, and pivoted numerical linear algebra routines, along with more modern machine learning-based strategies such as leverage sampling and neural networks. We conclude that a particular group orthogonal matching pursuit (GOMP) algorithm yields superior results for the problems we consider.
  \item \edits{We apply existing GOMP approximation theory to a single-fidelity approximation problem, providing justification for the competitive performance of GOMP compared to other algorithms in the context of subset selection. Our analysis does not directly apply to the multifidelity method we investigate, but provides a first step toward understanding group matching pursuit approaches for this multifidelity strategy.}
  \item We empirically show that the GOMP multifidelity procedure is effective when applied to a variety of nontrivial large-scale problems in computational science, such as compressible fluid dynamics and structural topology optimization. In particular, we observe that GOMP-based methods yield superior results in almost every situation we have tried.
\end{itemize}
\edits{We note that there are many alternative approaches to tackling the multifidelity problem, and these alternatives are substantially different in scope, applicability, and goals. For example, \cite{ng_multifidelity_2012} uses additive corrections to combine high-fidelity and low-fidelity quantities of interest into predictions; \cite{le2013multi,perdikaris2015multi} uses a multilevel Gaussian process strategy to build Gaussian process predictors from models at different hierarchies; \cite{peherstorfer_optimal_2016} uses a multilevel Monte Carlo prediction strategy for expectations of scalar quantities of interest to devise a resource allocation strategy. In contrast to these approaches, our strategy uses low-fidelity parametric variation to guide allocation of high-fidelity effort, but does not require any transformation operators between levels (e.g., such as additive corrections). A direct comparison between our strategy and alternative multifidelity approaches is beyond the scope of this paper, and so we leave a such a comparison for future work.}

\section{Previous work}
We will now consider the multifidelity proxy model introduced in \cite{narayan2013stochastic} and further studied in \cite{zhu2014computational}.
The authors of \cite{narayan2013stochastic} proposed using samples from a low-fidelity model to inform the parameter selection of the much more costly high-fidelity samples and to compute the reconstruction weights of the high-fidelity proxy model based on those low-fidelity samples.
The weights are computed from the Gramian (also known as the Gram matrix or inner product matrix) of the low-fidelity samples, and so the resulting model can be considered a non-parametric model with weights derived from the low-fidelity Gramian. For simplicity we will consider only a two-level (``bifidelity") situation with only a single low- and high-fidelity model. However, this procedure can be applied to many levels \cite{narayan2013stochastic}.

\subsection{Gramian non-parametric multifidelity algorithm}
We consider the following general multifidelity setup: Let $z \in D \subset \R^q$, $q \geq 1$, be a common parametric input into two simulations models of differing fidelities. We use $u^L$ and $u^H$ to denote the outputs of the low- and high-fidelity models, respectively, with $u^L(z)$ a vector-valued output of the low-fidelity model evaluated at parameter value $z$. The models produce outputs in vector spaces $V^L$ and $V^H$, respectively. To summarize our notation:
\begin{align*}
  u^L &: D \rightarrow V^L, & u^H &: D \rightarrow V^H.
\end{align*}
The vectors $u^L$ and $u^H$ are assumed to represent any spatial/temporal effects of interest. The multifidelity regime occurs when $u^H$ is more faithful to reality than $u^L$, but requires greater computational effort to simulate. That is, 
\begin{align}\label{eq:mf-setup}
  \left\| u^H(z) - u(z) \right\| &\ll \left\| u^L(z) - u(z) \right\|, & \mathrm{Cost}\left( u^H(z) \right) \gg \mathrm{Cost}\left(u^L(z)\right),
\end{align}
where $u(z)$ represents reality\footnote{We are intentionally being vague in defining $u(z)$ and the norms in \eqref{eq:mf-setup}. Such relations can be established through formal means, such as convergence of discretizations of mathematical models, or through more informal means, such as expert belief or knowledge in superiority of the high-fidelity model.}, and $\mathrm{Cost}(\cdot)$ is the requisite computational burden of evaluating the argument. In practice the assumptions \eqref{eq:mf-setup} frequently imply that the dimensions of the vector spaces satisfy
\begin{align*}
  \dim V^L = d^L \ll d^H = \dim V^H,
\end{align*}
but it is not necessary to assume this and the multifidelity algorithm does not require this assumption. \edits{A concrete example of the above situation is the following: Consider Poisson's equation on a domain $\Omega \subset \R^2$:
  \begin{align*}
    -\triangle_x u(x,z) &= f(x, z), & x &\in \Omega,;\;\ z \in D,
  \end{align*}
  where $\triangle_x$ denotes the Laplacian with respect to the $x$ variable. One may compute appropriate solutions to this problem with finite element methods, and suppose that $u^L$ and $u^H$ are two finite element solutions computed on different meshes: $u^L$ is computed on a relatively coarse mesh with mesh size parameter \eedits{$T$}, and $u^H$ is computed on a relatively fine mesh with mesh size parameter \eedits{$t \ll T$}. Since the total number of elements on a mesh in two dimensions scales like \eedits{$1/T^2$ or $1/t^2$}, this implies that the number of degrees of freedom for $u^H$, $d^H \sim \eedits{1/t^2}$, is much larger than that for $u^L$, $d^L \sim \eedits{1/T^2}$. Assuming the polynomial degree $p$ of approximation within each element is the same on both meshes, a standard \textit{a priori} finite element analysis yields the estimate
  \begin{align*}
    \left\| u - u^L \right\|_{L^2(\Omega)} &\leq C \eedits{T^{p+1}}, & \left\| u - u^H \right\|_{L^2(\Omega)} &\leq C \eedits{t^{p+1}},
  \end{align*}
for a constant $C$. Since \eedits{$t \ll T$}, and assuming the estimate above is sharp, this is an instance of the situation \eqref{eq:mf-setup}, where the unspecified norm in that equation is the $L^2$ norm on $\Omega$, and the cost function refers to computational cost. However, we note that our procedure applies to the more abstract situation \eqref{eq:mf-setup} and our procedure is not limited to a particular discretization, e.g., finite element methods.}
  
Due to the high computational cost, the number of high fidelity simulations is limited to $m$, with $m = \BigO(10)$ being a common bound.
In contrast, the low-fidelity model is much more computationally affordable with $n \gg m$ low-fidelity simulations available.

Let $\gamma = \{z_1, \ldots, z_n\}$ be a set of sample points in $D$, which define matrices
\begin{align*}
\bs{a}^L_j &= u^L\left(z_j\right), & \bs{A}^L &= \left[ \begin{array}{cccc} \bs{a}^L_1 & \bs{a}^L_2 & \ldots & \bs{a}_n^L \end{array} \right] \in \R^{d^L \times n}, \\
\bs{a}^H_j &= u^H\left(z_j\right), & \bs{A}^H &= \left[ \begin{array}{cccc} \bs{a}^H_1 & \bs{a}^H_2 & \ldots & \bs{a}_n^H \end{array} \right] \in \R^{d^H \times n}.
\end{align*}
Let $S \subset [n]$ denote a generic set of column indices, and for $\bs{A} \in \R^{d \times n}$ having columns $\bs{a}_j$ we define
\begin{align*}
S &= \left\{ j_1, \ldots, j_{|S|} \right\}, & \bs{A}_S &= \left[ \begin{array}{cccc} \bs{a}_{j_1} & \bs{a}_{j_2} & \ldots & \bs{a}_{j_{|S|}} \end{array} \right].
\end{align*}
The authors in \cite{narayan2013stochastic} showed that the structure of $\bs{A}^L$ can be used to identify a small number of column indices $S \subset [n]$, $|S| = m$, so that $\bs{A}^L_S$ can be used to form a rank-$m$ approximation to $\bs{A}^L$, and $\bs{A}^H_S$ can be used to form a rank-$m$ approximation to $\bs{A}^H$. Precisely, they form the approximations
\begin{align}\label{eq:mf-least-squares}
  \bs{A}^L &\approx \bs{A}^L_S \left(\bs{A}^L_S\right)^\dagger \bs{A}^L, & 
  \bs{A}^H &\approx \bs{A}^H_S \left(\bs{A}^L_S\right)^\dagger \bs{A}^L,
\end{align}
\edits{where $\bs{A}^\dagger$ is the Moore-Pensore pseudoinverse of $\bs{A}$.} An important observation in the above approximations is that the representation for $\bs{A}^H$ requires \textit{only} $\bs{A}^H_S$, i.e., it only requires $m = |S|$ evaluations of the high-fidelity model. The construction of $S$ is performed in a greedy fashion, precisely as the first $m$ ordered pivots in a pivoted Cholesky decomposition of $\left(\bs{A}^L\right)^T \bs{A}^L$. While $\bs{A}^H$ only represents $u^H$ on a discrete set $\gamma$, the procedure above forms the approximation
\begin{equation} \label{eqn:approx}
  u^H(z) \approx \sum_{i=1}^m c_i\edits{(z)} u^{H}(z_{j_i}),
\end{equation}
where the coefficients $c_i$ are the expansion coefficients of $u^L(z)$ in a least-squares approximation with the basis $\left\{ u^L(z_{j_i}) \right\}_{i=1}^m$. Thus, the \edits{above approximation to the} high-fidelity simulation may actually be evaluated at any location $z \in D$ if $u^L(z)$ is known. This procedure has the following advantages:
\begin{itemize}
  \item Once $m$ high-fidelity simulations have been computed and stored, the \edits{mathematical construction of the approximation} \eqref{eqn:approx} to the high-fidelity model \edits{is tantamount to evaluating the coefficients $c_i(z)$, the latter of which has computational complexity depending only on the low-fidelity model, cf. \eqref{eqn:approx}. Explicitly constructing the vector $u^H(z) \in \R^{d^H}$ naturally requires an $m$-fold addition of length-$d^H$ vectors as in \eqref{eqn:approx}.}
  \item The subset $S$ is identified via analysis of the inexpensive low-fidelity model, so that a very large set $\gamma$ may be used to properly capture the parametric variation over $D$.
  \item It is not necessary for the spatiotemporal features of the low-fidelity to mimic those of the high-fidelity model. Section 6.2 in \cite{narayan2013stochastic} reveals that $u^L$ and $u^H$ may actually be entirely disparate models, yet the approximation \eqref{eqn:approx} can be accurate. \edits{However, these are empirical observations, and a deeper understanding of precisely how disparate low- and high-fidelity models can be is an open problem.}
  \item \edits{The cost of evaluating \eqref{eqn:approx} can be split into two portions: an ``offline" stage, where the ensemble $\{ u^N(z_{j_i})\}_{i=1}^m$ compiled and stored, and an ``online" stage, where the coefficients $c_i$ must be computed for a given $z$. The offline stage requires $m$ high-fidelity simulations and $n \gg m$ low-fidelity model evaluations, and is thus expensive; the online stage requires only a single low-fidelity model evaluation, $u^L(z)$, from which the coefficients $c_i$ are computed. Thus, this procedure can be efficient when one requires an approximation to the high-fidelity model at $M \gg m$ values of $z$.}
\end{itemize}

\subsection{Subset selection}
The critical portion of the previous section's algorithm is the identification of the column subset $S$. In order to guarantee convergence of the high-fidelity approximation \edits{in an appropriate norm on $V^H$}, one must also have some correlation between the \textit{parametric} variation of the high- and low-fidelity models. \edits{Theorem 4.4 of \cite{narayan2013stochastic} gives the conditions which can guarantee quantitative proximity of the multifidelity surrogate and the high-fidelity model. These conditions are, admittedly, difficult to verify.} In this paper, we assume such parametric correlation exists and focus exclusively on the problem of identification of $S$. Abstractly, this is a problem of subset selection among the $n$ columns of $\bs{A}^L$, and thus this problem focuses entirely on the low-fidelity model. To emphasize this and to simplify notation we dispense with the $L$ and $H$ superscripts, hereafter writing
\begin{align*}
  \bs{A} &\gets \bs{A}^L, & \bs{a}_j \gets \bs{a}_j^L.
\end{align*}

The subset selection problem, identification of $S$, is an allocation problem. That is, given an accurate but expensive high-fidelity simulation, how do we allocate high-fidelity computational resources across the set of possible simulations $\gamma$? Noting the approximations \eqref{eq:mf-least-squares}, one way to accomplish this is to choose $S$ in a way that captures as much variation in the low-fidelity model as possible.

\begin{definition}\label{def:cssp}
Column subset selection problem (CSSP).
Find the column subset matrix $\mathbf{C = AS}$, where $\mathbf{C} \in \R^{d \times m}$, $\mathbf{S} \in \R^{n \times m}$ is a column selection matrix, and the $m \le n$ columns of $\mathbf{C}$ are a subset of the $n$ columns of $\mathbf{A}$, so that 
\begin{eqnarray} \label{eqn:cssp}
r = \|\mathbf{R}\|_F^2 = \|\mathbf{A - CC^{\dagger}A}\|_F^2
\end{eqnarray}
is minimized.
\end{definition}
The matrix $\bs{S}$ above mirrors the subset $S$ discussed earlier, and $\bs{C} = \bs{A}_S$. The exact CSSP is \edits{at present conjectured to be  NP-complete \cite{shitov2017column} and, if this were true, requires} the evaluation of all subsets of size $m$.  However there have been a large variety of strategies proposed to address CSSP; we summarize a selection of these strategies below:
\begin{itemize}
  \item Leverage sampling \cite{boutsidis2014near,papailiopoulos2014provable,cohen2015ridge,perry2016augmented}. The singular value decomposition (SVD) of $\mathbf{A}$ is computed or approximated, and this information is used to choose a column subset of $\bs{A}$, either randomly or deterministically. Leverage sampling has attractive theoretical guarantees, e.g., 
    $\|\mathbf{A - CC^{\dagger}A}\|_F^2 \le (1+\epsilon) \|\mathbf{A - A_k}\|_F^2$ where $\mathbf{C}$ is chosen according to the leverage score and $m$ is $\BigO(1/\epsilon)$. (The best rank-$k$ matrix $\bs{A}_k$ is defined by the Eckart-Young-Mirsky theorem.) In our results section we observe that leverage sampling on our datasets is less competitive than other sampling techniques.
\item The CUR matrix decomposition \cite{mahoney2009CUR}. This performs a column ($\mathbf{C}$) and row ($\mathbf{R}$) subset selection on  $\bs{A}$, resulting in an element-based decomposition of a matrix, so that $\mathbf{A} \approx \mathbf{CUR}$, where $\mathbf{U = C^{\dagger}AR^{\dagger}}$. The best known CUR matrix error bounds are obtained by using leverage sampling for the column and row subset selection \cite{mahoney2009CUR}, for that reason we consider our experimental comparison to leverage sampling and random sampling as sufficient to characterize related methods such as the CUR decomposition technique.
\item Deterministic interpolative decompositions (ID) \cite{cheng2005on}.  These are similar in spirit to CUR; ID methods frequently use a Gram-Schmidt-based column-pivoted QR method for subset selection, resulting in an approximation $\mathbf{A \approx CB}$, where $\mathbf{C}$ is a column subset of $\mathbf{A}$ and $\mathbf{B}$ is a coefficient matrix that minimizes the approximation error. (E.g., $\bs{B} = \bs{C}^\dagger \bs{A}$ is possible choice for $\bs{B}$.) In this paper we will use the pivoted QR method as an exemplar of deterministic ID algorithms.
\item Pivoted QR decompositions \cite{golub1965numerical,gu1996efficient}. One historical use of these routines was to select a column subset in order to reliably compute the least-squares solution of a linear system. For CSSP we run a pivoted QR routine and then sample the columns in order identified by the pivots \cite{chan1992some,papailiopoulos2014provable}. QR based subset selection has a much worse known theoretical guarantee with respect to the rank-$k$ tail bound $\|\mathbf{A - A_k}\|_F^2$ than leverage sampling, but empirically we have observed it performs much better.  Both \cite{narayan2013stochastic} and \cite{zhu2014computational} propose using a pivoted Cholesky routine in a manner that is algebraically equivalent to a pivoted QR method. Alternative linear algebraic pivoting strategies can be used as approaches to CSSP, such as partial or full pivoted LU \cite{golub2012matrix}. However, besides their mention in \cite{zhu2014computational} as a possible direction, we have not observed their use in existing CSSP work.
\end{itemize}
We note that \cite{altschuler2016greedy} showed a different type of bound for a general greedy strategy, of which the pivoted QR, LU, and Cholesky methods are specific instances.  The theoretical bound shown for the general greedy strategy is $\|\mathbf{CC^{\dagger}A}\|_F^2 \ge (1-\epsilon)\|\mathbf{DD^{\dagger}A}\|_F^2$ where $\mathbf{D}$ is the optimal subset with $\mathrm{rank}(\mathbf{D}) = k$ (defined by Definition \ref{def:cssp}), \edits{and $\mathbf{C}$ is a computed rank $m$ approximation, where $m \sim 16 k$. Table \ref{tab:analysis} gives more precise estimates.}  This bound is also of interest because large values of $\|\mathbf{CC^{\dagger}A}\|_F^2$ are related to small values of $\|\mathbf{A-CC^{\dagger}A}\|_F^2$. \edits{Working in the matrix Frobenius norm $\|\cdot\|_F$ is a common choice since this generally leads to easier analysis. The analysis in this manuscript can be used to provide estimates in the Frobenius norm; we expect estimates in other norms to be significantly more complicated, and leave investigation of other norms to future work.}

All the strategies above are described as applied to a set of Euclidean vectors, but each only requires a set of elements in a Hilbert space.
Consequently, by using the same Hilbert space assumptions from prior work \cite{narayan2013stochastic}, we are able to make use of a broad class of CSSP algorithms for the allocation portion of the multifidelity procedure under consideration.

\section{Allocation strategies for multifidelity simulation}
\edits{In this section, we start by reformulating a regularized version of the CSSP as a mixed-norm constrained least-squares problem.  This relaxation of the problem opens up to us a variety 
of different solution techniques.  We then present some theoretical justifications for why GOMP is competitive for the CSSP.  We follow this by a theoretical discussion of how various
methods we investigated compared, against each other and to GOMP, in terms of error guarantees, number of columns and asymptotic runtime to achieve those guarantees.}

\subsection{Relaxation of the exact subset problem} \label{sec:gomp}
We now reformulate a regularized version of the CSSP as a mixed-norm constrained least-squares problem. \edits{The goal of the discussion below is to write a relaxed version of the residual minimization for the CSSP as an optimization problem over a residual; such a formulation naturally suggests usage of group matching algorithms for residual minimization, where groups are matrix columns.}

Each column of the subset matrix $\mathbf{C}$ is taken from the full dataset, and therefore $\mathbf{c}_j \in \{\mathbf{a}_i| i = 1,\ldots n\}$.  
This representation can be formulated so that \edits{the residual matrix can be expressed as $\mathbf{R} = \bs{A} - \mathbf{C}\tilde{\mathbf{B}}$}, where $\tilde{\mathbf{B}} \in \R^{m \times n}$ is a coefficient matrix that essentially combines the coreset elements, or atoms, via weights.
Each data point is therefore approximated as,
\begin{equation}
\mathbf{a}_i \approx \mathbf{C}\tilde{\mathbf{b}}_i,
\end{equation}
and the residual is
\begin{align}
  r &= \|\bs{R}\|_F^2 = \sum_i \| \mathbf{a}_i - \mathbf{C}\tilde{\mathbf{b}}_i \|_2^2 = \|\mathbf{A} - \mathbf{C}\tilde{\mathbf{B}}\|_F^2 = \|\mathbf{A} - \mathbf{A}\mathbf{B}\|_F^2,
\end{align}
where $\mathbf{B} \in \R^{n \times n}$ \edits{is given by $\bs{B} = \bs{S} \tilde{\bs{B}}$. I.e., $\bs{B}$ is formed from $\tilde{\bs{B}}$ by inserting rows of zeros associated to columns of $\bs{A}$ that are not chosen in $\bs{C}$.}
The smaller matrix $\tilde{\mathbf{B}}$ is a submatrix of the larger more sparse $\mathbf{B}$, where $m$ specific (nonzero) rows are preserved. 

Instead of building $\mathbf{B}$ from $\tilde{\mathbf{B}}$, we are interested in solving for $\mathbf{B}$ directly which minimizes $r$.
The optimum of the objective $\|\mathbf{A} - \mathbf{A} \mathbf{B}\|_F^2$ would result in $\mathbf{B} = \mathbf{I}$, however if we constrain or regularize $\mathbf{B}$ so that it is row-sparse as described above, then we can compute a matrix $\mathbf{B}$ with the desired properties.
This leads to this regularized formulation for column subset selection,
\begin{equation} \label{eqn:0obj}
  \arg \min_{\mathbf{B} \in \R^{n \times n}} \|\mathbf{A - AB}\|_F^2 + \lambda \|\mathbf{B}^T\|_{0,0}
\end{equation}
where $\|\mathbf{M}\|_{0,0} = \sum_i \|\mathbf{M}_{:,i}\|_0$ is a mixed $\ell_{0,0}$ norm that induces column sparsity. 
Note the equivalence of the objective in (\ref{eqn:0obj}) to CSSP objective in (\ref{eqn:cssp}) above where $\mathbf{AB = CC^{\dagger}A}$, and the columns selected for $\mathbf{C}$ correspond to the non-zero rows of $\mathbf{B}$.
Because of the mixed $\ell_{0,0}$ norm we still have an NP-complete problem.
However in this new form, we can relax the mixed norm penalty to an $\ell_{2,1}$ norm, where $\|\mathbf{M}\|_{2,1} = \sum_i \|\mathbf{M}_{:,i}\|_2$,
\begin{equation} \label{eqn:obj}
  \mathbf{B}^* = \arg \min_{\mathbf{B} \in \R^{n \times n}} \|\mathbf{A - AB}\|_F^2 + \lambda \|\mathbf{B}^T\|_{2,1}
\end{equation}
This relaxed form also induces row sparsity of $\bs{B}$, \edits{and is more amenable to numerical methods for its solution, e.g. \cite{yuan2006model,yang2015fast}}. \edits{We have made two relaxations of the CSSP: the first is introduction of $\lambda$, and the second relaxes the mixed $\ell_{0,0}$ norm to an $\ell_{2,1}$ norm. In general, we therefore do not expect the solution to \eqref{eqn:obj} to match the solution of \eqref{eqn:0obj}, nor do we expect that it recovers the optimal CSSP solution. However, later in Theorems \ref{thm:noiseless} and \ref{thm:noisy}, we give sufficient conditions for specialized cases where this relaxation recovers the CSSP solution.}

The type of problem shown in (\ref{eqn:obj}) is known in the optimization literature as a \emph{group lasso} problems  \cite{yuan2006model}.  	
Group lasso problems have been well studied and there are a variety of algorithms available for their solution.
For the results in this paper we use group orthogonal matching pursuit (GOMP).  
Although this is a greedy algorithm and not optimal in general, it performs well for selecting subsets of columns \cite{lozano2009group}.  
Two other methods that have demonstrated efficiency in solving this type of optimization are the \emph{group least angle regression} (GLARS) and \emph{alternating direction method of multipliers} (ADMM) \cite{yuan2006model,boyd2011alternating,deng2011group}.  
We have found GOMP and GLARS to be the most useful overall for the exploration of various coreset sizes, while ADMM is well-suited to extremely large datasets \edits{\cite{deng2013group,boyd2011distributed}. The GOMP algorithm is simpler than GLARS, and since numerical tests on our particular datasets suggest no predictive advantage of one over the other, we have opted to concentrate on GOMP here. Some discussion that compares these differences between some of these group optimization problems is present in \cite{yuan2006model}, but we are unaware of quantitative results comparing how well approximate solvers for \eqref{eqn:obj} perform.}

While GOMP can be computed over a discrete data matrix $\bs{A}$, it only really makes use of the inner products of the data vectors, so that the algorithm only assumes a Hilbert space over the data points.
This means that the approach can be used in the context of continuous simulation vectors $u^L(z_i)$ as long as they exist in a proper Hilbert space. A pseudocode presentation of GOMP for column subset selection with continuous vectors is shown in Algorithm~\ref{alg:gomp}.
\edits{
The algorithm takes the input vectors, $u(\gamma)$, as well as two parameters, $\lambda$ and $\epsilon$, to control the sparsity and the precision, respectively (Algorithm~\ref{alg:gomp}, line 1).
We initialize the coefficient matrix, $\mathbf{B}$, to be the zero matrix and the active set, $\mathcal{A}$, to be empty (line 4).
Now, until \eedits{the sparsity level or the error precision} is satisfied, we loop and do the following each iteration:
(re)compute the residual matrix based on the current coefficient matrix (line 7), 
recompute the group correlations based on the residual matrix (line 8),
find the unused element with maximum group correlation and add it to the active set (lines 9 and 13), 
and recompute the coefficient matrix (line 14).
Note that the inner product matrix $\mathbf{Q}$ is computed using the appropriate inner product for the space.
The residual matrix $\mathbf{R}$ is actually the residual-correlation matrix as it represents the inner product of the residual with the input functions.
For example, considering line 7, each entry $\mathbf{R}_{i,j} = \langle u(z_i), r(z_j) \rangle = \langle u(z_i), u(z_j) - \sum_{k=1}^N b_{j,k} u(z_k) \rangle $, where $r(z_j)$ represents the residual at sample point $z_j$. 
The sparsity parameter $\lambda$ only controls the stopping criterion and is not used in the subset selection procedure. 
}

\begin{algorithm}
	\caption{\label{alg:gomp} Group orthogonal matching pursuit for subset selection of continuous vectors}
	\begin{algorithmic}[1]
		\State \textbf{Input} $u(\gamma)$ - input vectors, $\lambda$ - sparsity parameter,  $\epsilon$ - precision parameter
		\State \textbf{Output} $\mathbf{B} \in \R^{m \times p}$, coefficient matrix, $\mathcal{A}$, the subset indices.
		\Procedure{GOMP}{$u(\gamma),\lambda,\epsilon$}
			\State Set $\mathbf{Q}_{ij} = \langle u(z_i),u(z_j)\rangle$, the inner product matrix
			\State Initialize $\mathcal{A} = \emptyset$, $\mathbf{B = 0}$
                        \While{$\|\mathbf{B}^T\|_{2,1} < \frac{1}{\lambda}$}% or $\|\mathbf{A-AB}\|_F^2 > \epsilon$}
				\State $\mathbf{R = Q - QB}$ \Comment{update residual}
				\State $\mathbf{c}_i = ||\mathbf{R}_i||_2$ 
				\State $i^* = \arg \max_i \mathbf{c}_{\mathcal{A}^{c}}$ \Comment{element with max group correlation}
                                \If{$\bs{c}_{i^*} \leq \epsilon$}
                                  \State{Break}
                                \EndIf
				\State $\mathcal{A} = \mathcal{A} \cup \{i^*\}$
				\State $\mathbf{B}_{\mathcal{A}} = \mathbf{Q}_{\mathcal{A},\mathcal{A}}^{\dagger} \mathbf{Q}_{\mathcal{A},:}$ \Comment{recompute coefficient matrix}
			\EndWhile
			\State \textbf{return} $\mathbf{B},\mathcal{A}$ 
		\EndProcedure
	\end{algorithmic}
\end{algorithm}

\subsection{Group orthogonal matching pursuit}
We now present some theoretical justification for why GOMP is competitive for the CSSP. Our two results are Theorem \ref{thm:noiseless} and \ref{thm:noisy}. In particular, Theorem \ref{thm:noisy} shows that GOMP can identify a small representative column subset under some assumptions on the matrix and the representative set.

Let $\bs{A} \in \R^{d \times n}$ be a matrix whose $n$ columns consist of snapshot or feature vectors. 
%An implicit assumption is that $n \gg d$, but this is not necessary. 
We assume that the columns of $\bs{A}$ have unit norm, i.e., if $\bs{a}_j$ is the $j$th column of $\bs{A}$, then we assume that 
\begin{align}\label{eq:normalization}
  \left\| \bs{a}_j \right\|_2 = 1.
\end{align}
Our goal is to construct a low-rank approximation to every feature (column) in $\bs{A}$ using linear combinations of a (small) subset of feature vectors in $\bs{A}$. 
As described in the previous section, our strategy for accomplishing this is to solve the following optimization problem for the matrix $\bs{B} \in \R^{n \times n}$:
\begin{align}\label{eq:objective}
  \min \left\| \bs{A} - \bs{A} \bs{B} \right\|_F^2 + \lambda \left\| \bs{B}^T \right\|_{2,1},
\end{align}
where %$\|\cdot\|_F$ denotes the Frobenius norm of a matrix, and 
\begin{align*}
  \left\| \bs{B}^T \right\|_{2,1} = \sum_{j=1}^n \left\|\bs{b}_{j,:} \right\|_2,
\end{align*}
and $\bs{b}_{j,:}$ the $j$th row of $\bs{B}$. The mixed norm $\|\cdot\|_{2,1}$ promotes column sparsity of its argument. 
In the context of the product $\bs{A} \bs{B}$, column sparsity of $\bs{B}^T$ implies that a small number of columns of $\bs{A}$ are used to approximate the remaining columns.

Because we solve \eqref{eq:objective} using a group orthogonal matching pursuit (GOMP) algorithm, we proceed to rewrite this problem in the language of GOMP algorithms. Let $\bs{a}_i$, $i \in [n]$, denote the $i$th column of $\bs{A}$. 
We call a set of (column) indices $S \subset [n]$ a \textit{basis} set for $\bs{A}$ if 
\begin{align*}
  \mathrm{span} \left\{ \bs{a}_i \; \big| \; i \in S \right\} = \mathrm{range}\left(\bs{A}\right).
\end{align*}
Basis sets are not unique in general.
A \edits{\textit{complement}} set of indices equals $[n]\backslash S$ for a given basis set $S$. 
Our first result is a consistency result: given a basis set that satisfies reasonable assumptions, GOMP can identify this set.
\begin{theorem}\label{thm:noiseless}
  Let $\bs{A} \in \R^{d \times n}$ have columns $\bs{a}_i \in \R^d$, and let $S_{\mathrm{g}} \subset [n]$ be a basis set for $\bs{A}$. For each $j \in [n]$, there is a unique expansion of $\bs{a}_j$ in columns of the basis set:
  \begin{subequations}\label{eq:noiseless-conditions}
  \begin{align}\label{eq:C-def}
    \bs{a}_j = \sum_{i \in S_{\mathrm{g}}} D_{i,j} \bs{a}_i.
  \end{align}
  \edits{Use the elements $D_{i,j}$ to define the matrix $\bs{D}$. If both conditions}
  \begin{align}\label{eq:A-consistency}
    \bar{D} \coloneqq \max_{j \in [n] \backslash S_{\mathrm{g}}} \sum_{i \in S_{\mathrm{g}}} |D_{i,j}| < 1, \hskip 25pt \edits{\epsilon > 0,}%\hskip 25pt \edits{\lambda < \frac{1}{\|\bs{D}^T\|_{2,1}}, \hskip 25pt \epsilon > 0,}
  \end{align}
  \end{subequations}
  \editsbrittle{are satisfied, then GOMP, i.e., Algorithm \ref{alg:gomp} terminates after at most $|S_{\romg}|$ steps. At termination, the algorithm identifies a subset of $S_{\mathrm{g}}$ if fewer than $|S_{\romg}|$ steps are taken, and identifies $S_{\romg}$ if $|S_{\romg}|$ steps are taken.}
\end{theorem}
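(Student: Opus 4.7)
The plan is to induct on the iteration counter $k$, proving the invariant $\mathcal{A}^{(k)} \subseteq S_{\mathrm{g}}$, and then to argue that the stopping test must trigger no later than iteration $|S_{\mathrm{g}}|$. The key algebraic reduction is to put the GOMP residual in closed form. The coefficient update on line 14 produces $\bs{B}_{\mathcal{A},:} = (\bs{A}_\mathcal{A}^T \bs{A}_\mathcal{A})^\dagger \bs{A}_\mathcal{A}^T \bs{A}$ with the remaining rows of $\bs{B}$ equal to zero. Because the uniqueness in \eqref{eq:C-def} implies that $\{\bs{a}_i\}_{i \in S_{\mathrm{g}}}$ is linearly independent, the inductive hypothesis forces $\bs{A}_\mathcal{A}$ to have full column rank, so $\bs{A}\bs{B} = P_\mathcal{A} \bs{A}$, where $P_\mathcal{A}$ is the orthogonal projector onto $\mathrm{range}(\bs{A}_\mathcal{A})$. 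Hence $\bs{R}_j = \bs{A}^T (I - P_\mathcal{A}) \bs{a}_j$ for every $j$.

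The inductive step proceeds as follows. For $\ell \in [n] \setminus S_{\mathrm{g}}$, expand $\bs{a}_\ell$ via \eqref{eq:C-def} and use $(I - P_\mathcal{A}) \bs{a}_i = 0$ for $i \in \mathcal{A}$ to obtain
\[
  \bs{R}_\ell = \sum_{i \in S_{\mathrm{g}} \setminus \mathcal{A}} D_{i,\ell} \, \bs{R}_i.
\]
Combined with \eqref{eq:A-consistency}, this yields $\|\bs{R}_\ell\|_2 \le \bar{D} \, \max_{i \in S_{\mathrm{g}} \setminus \mathcal{A}} \|\bs{R}_i\|_2$. Provided the right-hand maximum is strictly positive, $\bar{D} < 1$ forces the GOMP argmax to lie in $S_{\mathrm{g}} \setminus \mathcal{A}$, preserving the invariant. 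For positivity, if $\mathcal{A} \subsetneq S_{\mathrm{g}}$ then linear independence rules out $\bs{a}_{i_0} \in \mathrm{range}(\bs{A}_\mathcal{A})$ for some $i_0 \in S_{\mathrm{g}} \setminus \mathcal{A}$, and the identity $\bs{a}_{i_0}^T (I - P_\mathcal{A}) \bs{a}_{i_0} = \|(I - P_\mathcal{A}) \bs{a}_{i_0}\|_2^2$ together with the normalization \eqref{eq:normalization} guarantees $\|\bs{R}_{i_0}\|_2 > 0$.

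For termination, once $\mathcal{A} = S_{\mathrm{g}}$ the equality $\mathrm{range}(\bs{A}_\mathcal{A}) = \mathrm{range}(\bs{A})$ forces every residual correlation to vanish, at which point the stopping test $\bs{c}_{i^*} \le \epsilon$ on line 10 fires because $\epsilon > 0$. Thus the loop cannot execute more than $|S_{\mathrm{g}}|$ iterations, and any earlier termination still leaves $\mathcal{A} \subseteq S_{\mathrm{g}}$ by the invariant, yielding the claimed subset structure. The main obstacle is the inductive step, specifically the combined task of (i) deriving the sparse identity relating $\bs{R}_\ell$ to $\{\bs{R}_i\}_{i \in S_{\mathrm{g}} \setminus \mathcal{A}}$ via the projection structure of $\bs{A}\bs{B}$, and (ii) ruling out the degenerate case $\max_{i \in S_{\mathrm{g}} \setminus \mathcal{A}} \|\bs{R}_i\|_2 = 0$ by leveraging linear independence inherited from uniqueness in \eqref{eq:C-def}.
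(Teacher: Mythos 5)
Your proof is correct, but it takes a genuinely different route from the paper's. The paper vectorizes the problem via Kronecker products, writing $\bs{H} = \bs{A}\otimes\bs{I}_n$ and $\bs{g} = \mathrm{vec}(\bs{B}^T)$, verifies the group exact-recovery condition $\|\bs{H}_{G_{\mathrm{g}}}^\dagger \bs{H}_{G_{\mathrm{d}}}\|_{\ast 2,1} \leq \|\bs{D}\|_1 = \bar{D} < 1$, and then invokes Corollary 1 of Lozano et al.\ as a black box, afterwards arguing that Algorithm~\ref{alg:gomp} coincides with that reference's algorithm up to the stopping criteria. You instead give a self-contained induction on the active set, using the closed form $\bs{R} = \bs{A}^T(\bs{I}-P_{\mathcal{A}})\bs{A}$ to derive the sparse identity $\bs{R}_\ell = \sum_{i\in S_{\mathrm{g}}\setminus\mathcal{A}} D_{i,\ell}\bs{R}_i$ and hence $\|\bs{R}_\ell\|_2 \leq \bar{D}\max_{i\in S_{\mathrm{g}}\setminus\mathcal{A}}\|\bs{R}_i\|_2$; this is essentially an unpacking, in the column-subset special case, of the exact-recovery argument that the cited corollary encapsulates, with the same quantity $\bar{D}$ (the max column sum of $|\bs{D}|$) playing the same role. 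What your version buys is that it is elementary, requires no external reference, and reasons directly about Algorithm~\ref{alg:gomp} as written (including the $\epsilon$-break and the $\lambda$ stopping rule), so the subset/full-set dichotomy at termination and the non-degeneracy of the selection step (ruling out $\max_{i\in S_{\mathrm{g}}\setminus\mathcal{A}}\|\bs{R}_i\|_2 = 0$ via linear independence) are handled explicitly rather than inherited. What the paper's version buys is the Kronecker/group machinery that is reused verbatim in the noisy case (Theorem~\ref{thm:noisy}), where a from-scratch argument like yours would be considerably more work. One cosmetic point: linear independence of $\{\bs{a}_i\}_{i\in S_{\mathrm{g}}}$ is implied by the uniqueness asserted in \eqref{eq:C-def} rather than by the paper's definition of a basis set (which only requires spanning), so it is worth stating that you are reading the uniqueness hypothesis as part of the assumptions, as you do; also the normalization \eqref{eq:normalization} is not actually needed for your positivity step, since $\bs{a}_{i_0}\notin\mathrm{range}(\bs{A}_{\mathcal{A}})$ already gives $\|(\bs{I}-P_{\mathcal{A}})\bs{a}_{i_0}\|_2>0$.
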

\begin{proof}
For any matrices $\bs{A}$ and $\bs{B}$ of appropriate size, the following identity holds:
\begin{align*}
  \mathrm{vec} \left( \bs{B}^T \bs{A}^T \right) = \left( \bs{A} \otimes \bs{I}_n \right) \mathrm{vec}\left(\bs{B}^T\right),
\end{align*}
where $\otimes$ is the Kronecker product, and $\mathrm{vec}\left(\cdot\right)$ is the vectorization (vertical concatenation of columns) of its argument.  
Defining $\bs{g} \coloneqq \mathrm{vec}\left(\bs{B}^T\right)$, $\bs{f} \coloneqq \mathrm{vec}\left(\bs{A}^T\right)$, and $\bs{H} = \left( \bs{A} \otimes \bs{I}_n \right)$, we can write our desired relation $\bs{A} \approx \bs{A} \bs{B}$ in vectorized format:
\begin{align}\label{eq:gomp-linear-system}
  \bs{H} \bs{g} \approx \bs{f}.
  %\left( \bs{A} \otimes \bs{I}_n \right) \bs{d} \approx \bs{f}.
\end{align}
The goal of \eqref{eq:objective}, aiming to promote row sparsity of $\bs{B}$, is now translated into a group sparsity of entries of $\bs{g} = \mathrm{vec}\left(\bs{B}^T\right)$. 
Each group of elements in $\bs{g}$ corresponds to an individual row of $\bs{B}$. We seek to prove that GOMP applied to this system recovers the basis groups. \edits{Our main tool to accomplish this is Corollary 1 in \cite{lozano2009group}; this corollary shows that a version of GOMP in that reference identifies the correct groups. We will first show this for our case, and then explain how this can be used to conclude the identification of correct groups for Algorithm \ref{alg:gomp}.}

We will let $G$ denote a generic subset of $[n^2]$. We define the groups $G_i \subset [n^2]$, $i = 1, \ldots, n$ as sets of $n$ sequential indices,
\begin{align*}
  G_i = \left\{ n(i-1) + 1, n(i-1) + 2, \ldots, n i \right\},
\end{align*}
and thus $G_1, \ldots, G_n$ partition $[n^2]$. 
We denote by $\bs{H}_{G}$ the matrix $\bs{H}$ restricted to the $G$-indexed columns. \editsbrittle{(Recall that $G$ is a subset of $[n^2]$ and $H$ has $n^2$ columns.)} The basis columns $S_{\mathrm{g}}$ can be translated into a set of basis groups $G_{\mathrm{g}}$:
\begin{align*}
  S_{\mathrm{g}} \subset [n] &\Longrightarrow G_{\mathrm{g}} \subset [n^2], \hskip 10pt G_{\mathrm{g}} = \cup_{i \in S_{\mathrm{g}}} G_i
\end{align*}
The complement of $S_{\mathrm{g}}$ in $[n]$, and the associated set $G_{\mathrm{d}}$, are defined accordingly
\begin{align*}
  S_{\romd} &\coloneqq [n] \backslash S_{\mathrm{g}}, & G_{\romd} &= \cup_{i \in S_{\romd}} G_i.
\end{align*}
We use $\bs{A}_S$ to denote selection of the $S$-indexed columns of $\bs{A}$, and $\bs{H}_G$ to denote selection of the $G$-indexed columns of $\bs{H}$. Under the assumption \eqref{eq:normalization}, we have $\bs{H}_{G_j}^T \bs{H}_{G_j} = \bs{I}_n$ for each group $j=1, \ldots, n$.

Row sparsity of $\bs{B}$ means that we wish to choose a small (sparse) number of groups $G_j$. Our next step is to show that the GOMP algorithm in \cite{lozano2009group} applied to \eqref{eq:gomp-linear-system} identifies groups of indices associated with the column indices $S_{\mathrm{g}}$. Consider the matrix,
\begin{align*}
  \bs{M} = \bs{A}_{S_{\mathrm{g}}}^T \bs{A}_{S_{\mathrm{g}}}.
\end{align*}
Since $S_{\mathrm{g}}$ is a set of basis indices, then $\bs{M}$ is invertible. This implies
\begin{align*}
  \bs{H}_{G_{\mathrm{g}}}^\dagger = \left( \bs{A}_{S_{\mathrm{g}}} \otimes \bs{I}_n \right)^\dagger = \bs{A}_{S_{\mathrm{g}}}^\dagger \otimes \bs{I}_n = \left(\bs{M}^{-1} \bs{A}_{S_{\mathrm{g}}}^T\right) \otimes \bs{I}_n
\end{align*}
Then 
\begin{align*}
  \bs{H}_{G_{\mathrm{g}}}^\dagger \bs{H}_{G_{\romd}} =  \left( \left(\bs{M}^{-1} \bs{A}_{S_{\mathrm{g}}}^T\right) \otimes \bs{I}_n \right) \left( \bs{A}_{S_{\romd}} \otimes \bs{I}_n \right) = \left( \bs{M}^{-1} \bs{A}_{S_{\mathrm{g}}}^T \bs{A}_{S_{\romd}}\right) \otimes \bs{I}_n.
\end{align*}
Define $\bs{D} \coloneqq \bs{M}^{-1} \bs{A}_{S_{\mathrm{g}}}^T \bs{A}_{S_{\romd}} \in \R^{s \times (n-s)}$, and note that it is comprised of the entries $D_{i,j}$ defined in \eqref{eq:C-def}. We need to introduce a mixed vector norm. Let $q \in \N$ be arbitrary; for a vector $\bs{x} \in \R^{q n}$, we define
\begin{align*}
\bs{x} = \left( \begin{array}{c} \bs{x}_1 \\ \bs{x}_2 \\ \vdots \\ \bs{x}_q \end{array} \right) \in \R^{q n}, \hskip 10pt \bs{x}_j \in \R^n \hskip 15pt \Longrightarrow \hskip 15pt \left\| \bs{x} \right\|_{2,1} = \sum_{j=1}^q \left\| \bs{x}_j \right\|_2.
\end{align*}
Given such a vector $\bs{x}$, we use notation $\bs{x}_j$ to mean a length-$n$ vector from the elements of $\bs{x}$ as above. We have
\begin{align*}
  \left\| \left( \bs{D} \otimes \bs{I}_n \right) \bs{v} \right\|_{2,1} &= \left\| \sum_{j=1}^{n-s} \left( \bs{d}_j \otimes \bs{v}_j \right) \right\|_{2,1} \leq \sum_{j=1}^{n-s} \left\| \bs{d}_j \otimes \bs{v}_j \right\|_{2,1} \\
                                                                       &= \sum_{j=1}^{n-s} \sum_{i=1}^s \left| D_{i,j} \right| \|\bs{v}_j\|_2  \leq \left( \max_{1 \leq j \leq n-s} \sum_{i=1}^s \left| D_{i,j} \right| \right) \sum_{j=1}^{n-s} \left\| \bs{v}_j \right\|_2 = \left\| \bs{D} \right\|_1 \| \bs{v}\|_{2,1}
\end{align*}
Chaining together these results with the assumption \eqref{eq:A-consistency}, we have shown
\begin{align*}
  \left\| \bs{H}^+_{G_{\mathrm{g}}} \bs{H}_{G_{\romd}} \right\|_{\ast 2,1} \coloneqq \sup_{\bs{v} \in \R^{(n-s)n}\backslash \{\bs{0}\} } \frac{\left\| \left(\bs{D} \otimes \bs{I}_n \right) \bs{v} \right\|_{2,1}}{\|\bs{v}\|_{2,1}} < 1, 
\end{align*}
\edits{where we have used $\|\cdot\|_{\ast 2,1}$ to denote the norm induced by the $\|\cdot\|_{2,1}$ norm on vectors, and not the mixed $\ell_{2,1}$ elementwise norm on matrices. Under this condition, Corollary 1 in \cite{lozano2009group} shows that the GOMP algorithm in that reference applied to $\bs{H} \bs{g} \approx \bs{f}$ identifies the basis groups in $\bs{d}$; equivalently, that GOMP algorithm identifies the basis columns of $\bs{A}$. This is \textit{almost} the result we require: our GOMP method, Algorithm \ref{alg:gomp}, is almost identical to the algorithm in \cite{lozano2009group}, the only difference being the presence of an additional regularization parameter $\lambda$ that influences the stopping criterion. Without this criterion, then Algorithm \ref{alg:gomp} coincides with the algorithm in \cite{lozano2009group}. Therefore, Algorithm \ref{alg:gomp} selects the same groups, and differs only in the termination.

  Since the algorithm in \cite{lozano2009group} identifies $S_{\romg}$ under our assumptions, then Algorithm \ref{alg:gomp} also identifies this set if it takes $|S_{\romg}|$ steps. Furthermore, Algorithm \ref{alg:gomp} takes at most $|S_{\romg}|$ steps, since at step $|S_{\romg}|+1$, the residual vanishes and the while loop terminates. Finally, the fact that Algorithm \ref{alg:gomp} identifies $S_{\romg}$ after $|S_{\romg}|$ steps implies that, if it terminates before $|S_{\romg}|$ steps due to the $\lambda$ stopping criterion, then the identified set at termination must be a subset of $S_{\romg}$.}
\begin{align*}
\end{align*}
\vskip-30pt
\end{proof}

The condition \eqref{eq:A-consistency} on the $D_{i,j}$ is a nontrivial requirement, \edits{both on the matrix $\bs{A}$, and on a basis set $S_{\mathrm{g}}$}, and it is difficult to verify in practice. \edits{This restriction is essentially an identification of classes of matrices for which we can guarantee that GOMP can be effective.} In addition, since the set $S_{\mathrm{g}}$ defining a basis group is not unique for a given matrix, many choices of $S_{\mathrm{g}}$ may not satisfy \eqref{eq:A-consistency}. One significant disadvantage of this analysis is that the condition \eqref{eq:A-consistency} cannot be checked before the set $S_{\mathrm{g}}$ has been identified.

For $\bs{A} \in \R^{d \times n}$, assuming $d \leq n$ and that $\bs{A}$ has full rank, we require $|S_{\mathrm{g}}| = d$ in order to satisfy the relation \eqref{eq:C-def}. However, $d$ can be very large and so in practice we will identify a set of features $S$ with size $|S| < d$; this suggests that \eqref{eq:C-def} can only be satisfied approximately in this case. Following the analysis in \cite{lozano2009group}, we can provide \edits{a} robust version of the above theorem, showing success of GOMP when the relation \eqref{eq:C-def} is satisfied only approximately.

We require a little more notation to proceed: With $\bs{A}$ and $S_{\mathrm{g}}$ as in Theorem \ref{thm:noiseless}, let $\bs{A}_{S_{\mathrm{g}}}$ be as defined in the proof of that theorem, i.e., the submatrix of $\bs{A}$ formed from the columns indexed by $S_{\mathrm{g}}$. Now define the smallest eigenvalue of $\bs{A}^T_{S_{\mathrm{g}}} \bs{A}_{S_{\mathrm{g}}}$:
\begin{align}\label{eq:lambda-min}
  \bar{\lambda} \coloneqq \lambda\left( \bs{A}^T_{S_{\mathrm{g}}} \bs{A}_{S_{\mathrm{g}}} \right) = \min_{\bs{v} \in \R^{|S_{\mathrm{g}}|} \backslash \{\bs{0}\}} \frac{ \left\| \bs{A}^T_{S_{\mathrm{g}}} \bs{A}_{S_{\mathrm{g}}} \bs{v}\right\|_2}{ \left\| \bs{v} \right\|_2} > 0
\end{align}
where the inequality is true since the columns of $\bs{A}_{S_{\mathrm{g}}}$ are a basis for $\mathrm{range}(\bs{A})$. 
\begin{theorem}\label{thm:noisy}
  Let $\bs{A} \in \R^{d \times n}$ have normalized columns $\bs{a}_i \in \R^d$, and let $S_{\mathrm{g}} \subset [n]$ be a basis set for $\bs{A}$ such that the columns of $\bs{A}$ satisfy
  \begin{subequations}\label{eq:noisy-conditions}
  \begin{align}\label{eq:C-def-noisy}
    \bs{a}_j = \sum_{i \in S_{\mathrm{g}}} D_{i,j} \bs{a}_i + \bs{n}_j,
  \end{align}
  and assume that each $\bs{n}_j \in \R^d$ for $j= [n]\backslash S_{\mathrm{g}}$ has independent and identically distributed components, each a normal random variable with mean $0$ and variance $\sigma^2$.  For any $\eta \in (0, 1/2)$, let the stopping tolerance $\varepsilon$ in the GOMP Algorithm \ref{alg:gomp} satisfy
  \begin{align}\label{eq:epsilon-noisy}
    \varepsilon > \frac{\sigma\sqrt{2 \cdot n \cdot d \log(2 \cdot  n \cdot d/\eta)}}{1 - \bar{D}}.
  \end{align}
  Assume that the $D_{i,j}$ satisfy \eqref{eq:A-consistency}, and further that they satisfy
  \begin{align}\label{eq:C-noisy}
    \min_{i \in S_{\mathrm{g}}} \sqrt{\sum_{j=1}^n D_{i,j}^2} > \frac{\varepsilon \sqrt{8}}{\bar{\lambda}}.
  \end{align}
  \end{subequations}
  Then \edits{with probability at least $1 - 2 \eta$}, GOMP identifies the set $S_{\mathrm{g}}$ and computes a solution $\bs{B}$ satisfying
  \begin{align*}
    \max_{i,j} \left| B_{i,j} - D_{i,j} \right| \leq \sigma \sqrt{\frac{2 \log( \edits{2} |S_{\mathrm{g}}|/\eta)}{\bar{\lambda}}}.
  \end{align*}
\end{theorem}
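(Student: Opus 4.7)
The plan is to mimic the proof of Theorem \ref{thm:noiseless}, reducing the recovery problem to the vectorized linear system $\bs{H} \bs{g} \approx \bs{f}$ with group structure induced by rows of $\bs{B}$, and then invoke a noisy-observation extension of the group matching pursuit analysis in \cite{lozano2009group}. Two Gaussian concentration arguments then convert the resulting deterministic tolerance and signal-strength conditions into probabilistic ones. Since \eqref{eq:C-def-noisy} has the same form as \eqref{eq:C-def} up to the additive noise columns $\bs{n}_j$, the deterministic incoherence inequality $\|\bs{H}^+_{G_{\romg}} \bs{H}_{G_{\romd}}\|_{*2,1} \leq \bar{D} < 1$ is inherited from the proof of Theorem \ref{thm:noiseless} under \eqref{eq:A-consistency}.

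Next I would invoke the noisy analogue of Corollary 1 in \cite{lozano2009group}. A typical such result requires three ingredients: the incoherence just described, a lower bound on the minimum true group magnitude $\min_{i \in S_{\romg}} \|\bs{d}_{i,:}\|_2$ relative to $\varepsilon/\bar{\lambda}$ (which is exactly \eqref{eq:C-noisy} using \eqref{eq:lambda-min}), and a stopping tolerance $\varepsilon$ exceeding the dual-norm noise-induced correlation, scaled by $1/(1-\bar D)$. Under these conditions GOMP selects only indices from $S_{\romg}$ and terminates after exactly $|S_{\romg}|$ iterations. The probabilistic step controls the noise correlation: each inner product $\bs{a}_i^T \bs{n}_j$ is $N(0,\sigma^2)$ by the unit-normalization \eqref{eq:normalization}, so a standard Gaussian tail bound together with a union bound over the $n d$ entries governing the dual $\|\cdot\|_{*2,1}$ norm yields a bound of order $\sigma\sqrt{2 \log(2nd/\eta)}$ with probability at least $1 - \eta$. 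Combined with \eqref{eq:epsilon-noisy}, this certifies the tolerance assumption and yields support recovery on this high-probability event.

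Once $S_{\romg}$ is identified, line 14 of Algorithm \ref{alg:gomp} produces $\bs{B}_{S_{\romg},:} = \bar{\bs{M}}^{-1} \bs{A}_{S_{\romg}}^T \bs{A}$ with $\bar{\bs{M}} = \bs{A}_{S_{\romg}}^T \bs{A}_{S_{\romg}}$. Substituting the noisy model and using the extended convention $D_{ij} = \delta_{ij}$ and $\bs{n}_j = \bs{0}$ for $j \in S_{\romg}$ gives
$$\bs{B}_{S_{\romg},:} - \bs{D} = \bar{\bs{M}}^{-1} \bs{A}_{S_{\romg}}^T \bs{N},$$
where $\bs{N}$ is the matrix whose $j$th column is $\bs{n}_j$. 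Each entry of the right-hand side is a mean-zero Gaussian linear combination of the iid entries of $\bs{N}$, with variance bounded by $\sigma^2/\bar{\lambda}$ by \eqref{eq:lambda-min}. A second Gaussian tail bound and union bound over the relevant entries of $\bs{B}_{S_{\romg},:} - \bs{D}$ produce the claimed coefficient accuracy with probability at least $1-\eta$. A final union bound over the two probabilistic events yields the overall failure probability of at most $2\eta$.

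The principal obstacle is reconciling the precise form of the noisy support-recovery result from \cite{lozano2009group}, phrased in a dual group norm, with the entrywise Gaussian concentration above: the exponents and cardinalities must match the explicit constants $n d$ in \eqref{eq:epsilon-noisy} and $|S_{\romg}|$ in the final bound, and one must separately verify that the $\lambda$-based stopping criterion in Algorithm \ref{alg:gomp} (which is absent from the algorithm in \cite{lozano2009group}) is compatible with the hypotheses and does not cause premature termination on the high-probability event. A minor but related care point is treating the diagonal $j \in S_{\romg}$ terms consistently, which motivates the extended convention for $D_{ij}$ and $\bs{n}_j$ used above.
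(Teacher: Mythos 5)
Your overall route is the same as the paper's: vectorize the noisy model to $\bs{H}\bs{g} = \bs{f} + \bs{n}$ with groups given by rows of $\bs{B}$, inherit the incoherence bound $\|\bs{H}^+_{G_{\romg}}\bs{H}_{G_{\romd}}\|_{\ast 2,1} \leq \bar{D} < 1$ from the proof of Theorem \ref{thm:noiseless}, and invoke the noisy group-OMP recovery result of \cite{lozano2009group} (Theorem 3 there), with \eqref{eq:epsilon-noisy} and \eqref{eq:C-noisy} serving as the tolerance and signal-strength hypotheses. The paper does not re-derive the Gaussian concentration steps you sketch; it takes the probability statements, the $nd$-dependent threshold, and the coefficient-accuracy bound directly from the cited theorem. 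This matters for the ``principal obstacle'' you flag: your entrywise union bound naturally produces $\sigma\sqrt{2\log(2nd/\eta)}$, whereas \eqref{eq:epsilon-noisy} has $nd$ \emph{inside} the square root, because the relevant noise functional in the group setting is a $\|\cdot\|_{2,1}$-type sum of group norms rather than a max over entries. By citing the theorem rather than re-proving it, the paper sidesteps this reconciliation entirely, and your proposal as written does not resolve it.

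The one concrete omission relative to the paper is that you never verify the remaining stipulation of the cited theorem, namely $\lambda_{\mathrm{min}}\left(\bs{H}^T_{G_{\romg}}\bs{H}_{G_{\romg}}\right) \leq 1$. The paper's proof devotes most of its effort to exactly this: using $\bs{H}_{G_{\romg}} = \bs{A}_{S_{\romg}} \otimes \bs{I}_n$ it shows $\lambda_{\mathrm{min}}\left(\bs{H}^T_{G_{\romg}}\bs{H}_{G_{\romg}}\right) = \lambda_{\mathrm{min}}\left(\bs{A}^T_{S_{\romg}}\bs{A}_{S_{\romg}}\right) = \bar{\lambda}$, and then deduces $\bar{\lambda}\leq 1$ from $|S_{\romg}| = \mathrm{trace}\left(\bs{A}^T_{S_{\romg}}\bs{A}_{S_{\romg}}\right) = \sum_i \lambda_i \geq |S_{\romg}|\,\bar{\lambda}$, which uses the column normalization \eqref{eq:normalization}. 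You implicitly use the identification of $\bar{\lambda}$ with the group Gram eigenvalue (in \eqref{eq:C-noisy} and in your $\sigma^2/\bar{\lambda}$ variance bound), but the upper bound $\bar{\lambda}\leq 1$ is a genuine hypothesis of the theorem you invoke and should be checked; the trace argument above closes that gap.
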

\begin{proof}
  Let $\bs{D}$ be the matrix formed from the elements $D_{i,j}$, which is row-sparse, having non-zero entries only when $i \in S_{\mathrm{g}}$.  As before, we seek a computational approximation $\bs{B}$ to the (unknown) $\bs{D}$. The model \eqref{eq:C-def-noisy} results in the following linear system to determine $\bs{B}$,
  \begin{align*}
    \edits{\bs{A}\bs{B} \approx \bs{A} = \bs{A} \bs{D} + \bs{N}},
  \end{align*}
  where $\bs{N}$ is a concatenation of the columns $\bs{n}_j$. (We define $\bs{n}_i = \bs{0}$ for $i \in S_{\mathrm{g}}$.) Following the proof of Theorem \ref{thm:noiseless}, the vectorization of the transpose of the above equation is
  \begin{align*}
    \bs{H} \bs{g} = \bs{f} + \bs{n}
  \end{align*}
  where $\bs{n} = \mathrm{vec}\left(\bs{N}^T\right)$ and $\bs{f} = \mathrm{vec}\left(\bs{D}^T \bs{A}^T\right)$. The above equation holds when $\bs{g}$ has group sparsity defined by the set $S_{\mathrm{g}}$. We seek to show that GOMP applied to this system, having knowledge only of $\bs{H}$ and $\bs{f} + \bs{n}$, can identify groups of $\bs{g}$ corresponding the column indices $S_{\mathrm{g}}$ for $\bs{A}$. 

  The system above is simply a ``noisy" version of GOMP, where $\bs{n}$ is a noise vector. We seek to apply Theorem 3 in \cite{lozano2009group} to show the conclusions. Assumptions \eqref{eq:noisy-conditions} and \eqref{eq:A-consistency} are among the stipulations of this theorem. The remaining stipulation is 
  \begin{align}\label{eq:thm-noisy-temp}
    \lambda_{\mathrm{min}} \left( \bs{H}^T_{G_{\mathrm{g}}} \bs{H}_{G_{\mathrm{g}}} \right) \leq 1,
  \end{align}
  where $\lambda_{\mathrm{min}} \left(\cdot\right)$ denotes the minimium eigenvalue of the symmetric matrix input. We can show this via applications of Kronecker product properties. First we have
  \begin{align*}
    \bs{H}_{G_{\mathrm{g}}}^T \bs{H}_{G_{\mathrm{g}}} = \left( \bs{A}_{S_{\mathrm{g}}} \otimes \bs{I}_n \right)^T \left( \bs{A}_{S_{\mathrm{g}}} \otimes \bs{I}_n \right) = \left(\bs{A}_{S_{\mathrm{g}}}^T \bs{A}_{S_{\mathrm{g}}}\right) \otimes \left(\bs{I}_n \bs{I}_n \right).
  \end{align*}
  If $\lambda_i$, $i=1, \ldots, |S_{\mathrm{g}}|$ are the eigenvalues of $\bs{A}_{S_{\mathrm{g}}}^T \bs{A}_{S_{\mathrm{g}}}$, then they are also the eigenvalues of $\left(\bs{A}_{S_{\mathrm{g}}}^T \bs{A}_{S_{\mathrm{g}}}\right) \otimes \left(\bs{I}_n \bs{I}_n \right)$, each having multiplicity $n$. Thus, 
  \begin{align*}
    \lambda_{\mathrm{min}} \left( \bs{H}^T_{G_{\mathrm{g}}} \bs{H}_{G_{\mathrm{g}}} \right) = \lambda_{\mathrm{min}} \left( \bs{A}^T_{S_{\mathrm{g}}} \bs{A}_{S_{\mathrm{g}}} \right) = \bar{\lambda},
  \end{align*}
  with $\bar{\lambda}$ defined in \eqref{eq:lambda-min}. The matrix $\bs{A}_{S_{\romg}}^T \bs{A}_{S_{\romg}}$ is positive-definite and has diagonal entries equal to 1 since the columns $\bs{a}_i$ are normalized. Thus,
  \begin{align*}
    |S_{\romg}| = \mathrm{trace} \left(\bs{A}_{S_{\romg}}^T \bs{A}_{S_{\romg}} \right) = \sum_{i=1}^{|S_{\romg}|} \lambda_i \geq |S_{\romg}| \bar{\lambda},
  \end{align*}
  so that $\bar{\lambda} \leq 1$, showing \eqref{eq:thm-noisy-temp}. Having shown this we may apply Theorem 3 in \cite{lozano2009group}, which yields the conclusion of the theorem.
\end{proof}
Note that \eqref{eq:C-def-noisy} can be satisfied with $|S_{\romg}| < d$ even if $\bs{A}$ has full rank. When the columns of $\bs{A}$ are feature vectors that are outputs of scientific models, then the assumption that columns are perturbed by random noise is not realistic. However, the main purpose of Theorem \ref{thm:noisy} is to show that GOMP is robust to relaxations of \eqref{eq:C-def}.

\subsection{Theoretical discussion}
We \eedits{describe briefly} how the various methods we have investigated compare in terms of error \eedits{or projection norm} guarantees, number of columns and asymptotic runtime to achieve those guarantees. \eedits{We present certain results that we believe are relevant; the results we review are not meant to be comprehensive.}

In Table~\ref{tab:analysis} the relationship between $\epsilon$ and $m$ for deterministic leverage score sampling assumes a power-law distribution, parameterized as $\ell_i = \frac{1}{i^{1+\eta}}$, on the leverage scores.
\edits{The algorithms Cholesky, QR, LU, and GOMP are all greedy algorithms, and thus be analyzed using results from \cite{altschuler2016greedy}, which analyzes a general \emph{greedy} column subset algorithm.}
For QR we report the multiplicative error reported in \cite{gu1996efficient}, although the results from \cite{altschuler2016greedy} apply as well.

We observe \eedits{among the results we compile} that leverage sampling has the most \eedits{attractive} theoretical error bounds, because the bound is multiplicative and depends on the level of error, $\epsilon$, which is \eedits{a satisfactory metric} in applications.
The rest of the methods have the same error bound from \cite{altschuler2016greedy}, which is not directly related to the best rank-k approximation.
In addition QR has a multiplicative relationship to the best rank-k approximation, but the coefficient grows exponentially with respect to $k$. 

\eedits{In summary, the various algorithms we list in Table \ref{tab:analysis} can in theory be effective in choosing subsets. While the theoretical projection and error bounds do not allow us to conclude in practice which algorithm is superior, we aim to demonstrate in the results section that, for a variety of multifidelity problems, the GOMP strategy appears to be more effective than the rest.}

\begin{table}
\resizebox{\textwidth}{!}{%
\begin{tabular}{l|l|l|l}
\hline
Algorithm & Runtime & Theoretical error & \eedits{$m = m(k,\epsilon)$} \\
\hline
Leverage \cite{papailiopoulos2014provable} & $\BigO(n^3)$ & $\|\mathbf{A - CC^{\dagger}A}\|_F^2 \le (1+\epsilon) \|\mathbf{A - A_k}\|_F^2$ & $\eedits{m}=\max\left(\left(\frac{2k}{\epsilon}\right)^{\frac{1}{1+\eta}},\left( \frac{2k}{\eta \epsilon} \right)^{\frac{1}{\eta}}, k \right)$ \\
Cholesky \cite{altschuler2016greedy} & $\BigO(n^2m)$ &  $\|\mathbf{CC^{\dagger}A}\|_F^2 \ge (1-\epsilon)\|\mathbf{DD^{\dagger}A}\|_F^2$ & $m = \frac{16k}{\epsilon \sigma_{\min}(DD^{\dagger})}$ \\
QR \cite{gu1996efficient} & $\BigO(n^2m)$ &  $\|\mathbf{A - CC}^{\dagger}\mathbf{A}\|_2 \le \left( 1 + \sqrt{n - k} \cdot 2^k \right) \|\mathbf{A - A_k}\|_2$ & $m = k$ \\
LU \cite{altschuler2016greedy} & $\BigO(n^2m)$ &  $\|\mathbf{CC^{\dagger}A}\|_F^2 \ge (1-\epsilon)\|\mathbf{DD^{\dagger}A}\|_F^2$ & $m = \frac{16k}{\epsilon \sigma_{\min}(DD^{\dagger})}$ \\
GOMP \cite{altschuler2016greedy} & $\BigO(n^2m^2)$ &  $\|\mathbf{CC^{\dagger}A}\|_F^2 \ge (1-\epsilon)\|\mathbf{DD^{\dagger}A}\|_F^2$ & $m = \frac{16k}{\epsilon \sigma_{\min}(DD^{\dagger})}$ \\
\hline
\end{tabular}%
}
\caption{\textbf{Theoretical comparison}
among the subset-selection methods discussed here. \eedits{Each method computes a size-$m$ subset that can achieve a multiplicative-type bound relative to the optimal size-$k$ subset, where $m$ depends on $k$ and an error tolerance $\epsilon$. $\mathbf{C}$ is the computed subset matrix of rank $m$ using the indicated algorithm, and $\mathbf{D}$ is the optimal subset matrix of rank $k$, where optimal means with respect to the projection error committed in the norm used in the ``Theoretical error" column. $\epsilon$ is an error tolerance, and $\eta$ is a weighting parameter for leverage scores.}
}\label{tab:analysis}
\end{table}

\section{Experimental results}

\subsection{Methods compared}
We use a collection of methods from the literature both from previous work in multifidelity simulation approximation and in the more general CSSP literature.
Previous work in multifidelity simulation approximation has primarily used pivoted Cholesky (chol) for subset selection, although authors have alluded to the possibility of using other pivoted decomposition methods, such as partially-pivoted LU decomposition (lu) and pivoted QR decomposition (qr).
All of these decomposition methods have a general approach of selecting the element with the largest residual norm after removing contribution from previous selected elements, with the exception of LU which does this but using the element with the maximum residual element magnitude (rather than full norm).
From the CSSP literature we compare against leverage score sample (lev) that makes use of singular value decomposition (SVD) of the Gram matrix to make the selection.
Finally we compare against the proposed method, grouped orthogonal matching pursuit (gomp), which was explained in detail in Section~\ref{sec:gomp}.
We also include results using a uniformly random selection (rand) and a best rank-k approximation (rank-k) as contextual benchmarks. \edits{Here, rank-k means the best approximating matrix of rank $k$ measured in the $\ell^2$ or Frobenius norm. The Eckart-Young-Mirsky theorem gives an explicit construction of this matrix as a truncated SVD of the full matrix.}
All methods used are summarized in Table~\ref{tab:methods}.

\begin{table}
\centering
\begin{tabular}{l|l}
\hline
Abbreviation & Algorithm \\
\hline
rand & Uniform random \\
lev & Deterministic leverage score \cite{papailiopoulos2014provable} \\
qr & Pivoted QR \cite{gu1996efficient} \\
chol & Pivoted Cholesky \cite{narayan2013stochastic} \\
lu & Partially pivoted LU \cite{golub2012matrix} \\
gomp & Group orthogonal matching pursuit (Section~\ref{sec:gomp})\\
rank-k & Best rank-k approximation \cite{golub2012matrix} \\
\hline
\end{tabular}
\caption{%
Summary of methods used in the empirical comparison for choosing for resource allocation.
}
\label{tab:methods}
\end{table}

We report error as a squared $\ell_2$ error normalized by the sum of squared $\ell_2$ norms of the original data, $E = \frac{\sum_{i}\|\mathbf{X}_i - \tilde{\mathbf{X}}_i\|_2^2}{\sum_{i}\|\mathbf{X}_i\|_2^2}$, where $\mathbf{X}_i$ is the $i$-th simulation and $\tilde{\mathbf{X}}_i$ is the proxy estimation for the $i$-th simulation.

%%%%%%%%%%%%%%%%%%%%%%%%%%%%%%%%%%%%%%%%%%%%%%%%%%%%%%
\subsection{Burgers equation}
We use the viscous Burgers equation to evaluate the method in a well-studied and familiar setting.
We introduce uncertain perturbations to the left boundary condition and the viscosity parameter, similar to the setting studied in \cite{zhu2014computational}.

Specifically we use,
\begin{eqnarray}
&u_t + uu_x = \nu u_{xx}, &x \in (-1,1),\\
&u(-1) = 1 + \delta, &u(1) = -1
\end{eqnarray}
where $\nu \sim U(0.1,1)$ and $\delta \sim U(0,0.1)$ are both uniform random variables in the respective ranges.
This same problem with only the boundary perturbation (fixed viscosity) was studied in \cite{zhu2014computational} because it is extremely sensitive to boundary perturbations \cite{xiu2004supersensitivity}.
We found that adding a second element of uncertainty around the viscosity parameter, $\nu$, makes the problem more interesting from the perspective of attempting a linear approximation.

\subsubsection{Setup}
We sampled this parameter space using $400$ total samples by taking $20$ uniform grid samples within the boundary perturbation range, $(0,0.1)$, and $20$ uniform grid samples within the viscosity perturbation range $(0.1,1.0)$.
For each parameter sample pair, we ran the low-fidelity and high-fidelity simulation for a set number of $t=1.5 \times 10^{6}$ simulation steps, resulting in $400$ different simulation results for the low-fidelity model and another $400$ different simulation results for the high-fidelity model.

\subsubsection{Analysis}
The ultimate goal is to understand how each subset selection method performs using the low-fidelity for selection, while the error is computed with respect to the high-fidelity estimation.
To test this, we use each method to select a subset and reconstruction weights using the low-fidelity dataset, and then test the reconstruction error in both the low-fidelity samples and the high-fidelity samples.

We observed that all methods improved with each additional sample available, but certain methods do better overall and some outperform all others when the number of samples is extremely small.
This latter case is important for our setting because we are in the setting where only a few samples can be computed in the high-fidelity model.
We found that overall, deterministic leverage-score sampling performed the worst, even after a relatively large number of samples have been found. \edits{Note, however, that leverage sampling is frequently used in the context of very large data sets, and in this particular experiment we have access to only 400 samples. Thus, leverage sampling may be a good strategy for column subset selection if more data were available, or when a larger number of subsamples are selected.
In this experiment, random sampling appears to do better on average than deterministic leverage score sampling. 
However when we look at the worst-case example from a random subset, we see that all other methods do better for small subsets.  
This last point is important when considering an allocation strategy of only a few high-cost simulations, because while on average random does well it can do quite poorly in specific cases.
}
The LU-based partial pivoting sampling approach did better than random, but worse overall than the remaining methods.
In all of our experiments, QR and Cholesky based sampling performed nearly identically.  This can be understood as follows:  when acting on a symmetric positive-definite matrix like the Gramian, the QR and Cholesky pivoting strategies become nearly equivalent.
Finally, the proposed GOMP-based sampling approach performs the best overall, and particularly for very small subset setting.
These results are summarized in Figure~\ref{fig:burgers:projerr} which shows the reconstruction error in the low-fidelity model and the high-fidelity model against the exact solution.

\edits{The right-hand panel of Figure \ref{fig:burgers:projerr} (the high-fidelity error plot) only showcases errors from this multifidelity procedure, and not from, e.g., a best rank-$k$ approximation on the high-fidelity ensemble of solutions. Our focus here is on the empirical study of the quality of high-fidelity subset selection using the low-fidelity data; under this focus, the shown plots are the most relevant and error study of a high-fidelity rank-$k$ approximation falls out of scope. Instead of subset selection quality, one can ask about how using the low-fidelity model in the reconstruction procedure affects the accuracy. I.e., this is a question about how the approximation to $\bs{A}^H$ in \eqref{eq:mf-least-squares} would compare with an approximation using ``high-fidelity" coefficients. I.e., it is a question concerning the accuracy of
  \begin{align*}
    \bs{A}^H \approx \bs{A}^H_S \left(\bs{A}^H_S\right)^\dagger \bs{A}^H, \hskip 15pt \textrm{ vs } \hskip 15pt \bs{A}^H \approx \bs{A}^H_S \left(\bs{A}^L_S\right)^\dagger \bs{A}^L,
  \end{align*}
  once $S$ has been determined. The effect of this approximation is not the central goal of this paper, and so we leave this question for future investigation. However, we note that this question has been investigated in some recent work. \cite{hampton_parametric/stochastic_2017,kesh17}. The remaining figures in this manuscript therefore focus mainly on studies pertaining to quality of the subset selection.
}

\begin{figure}[tb]
 \centering 
 \includegraphics[width=.49\textwidth]{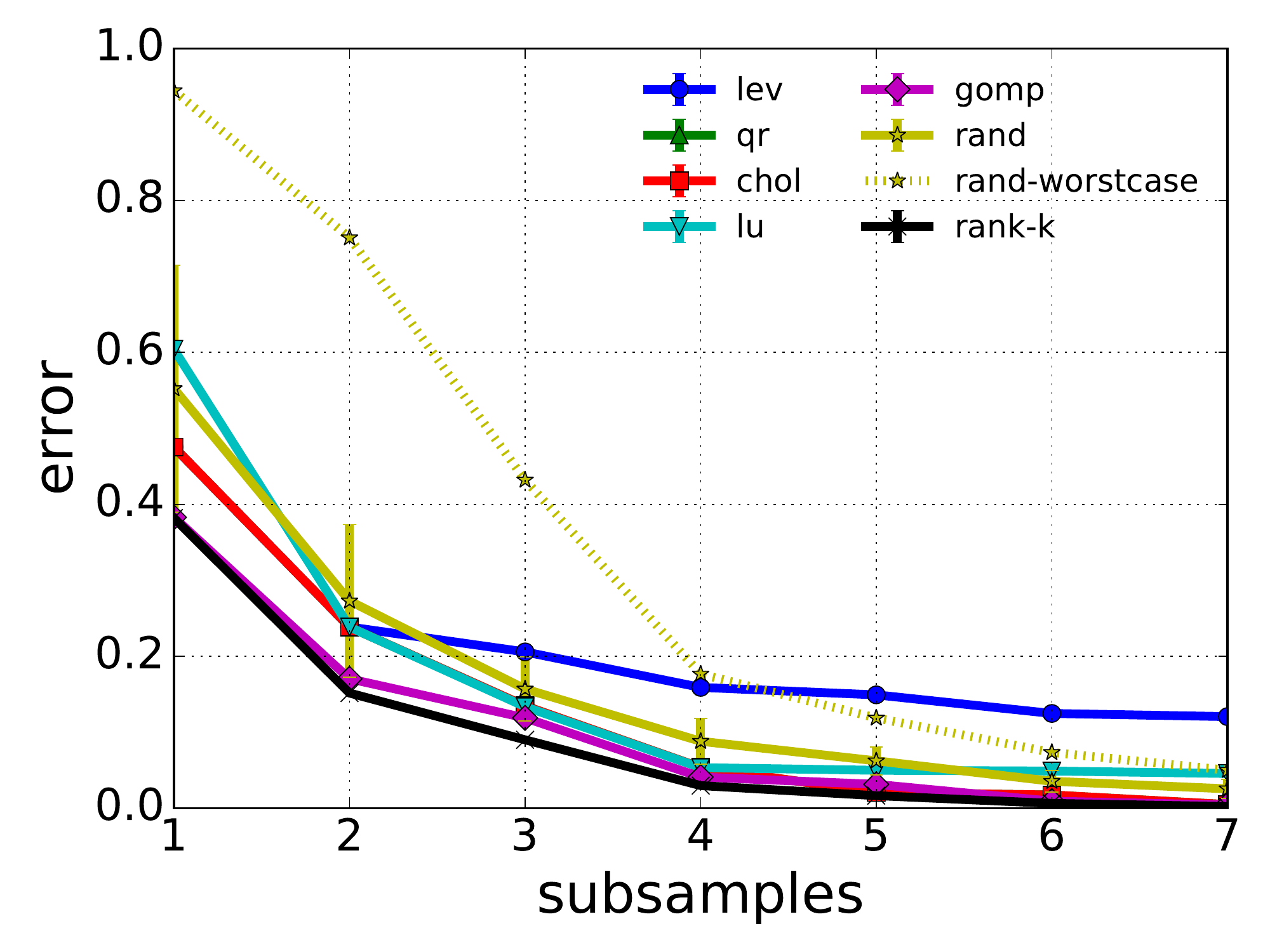} 
 \includegraphics[width=.49\textwidth]{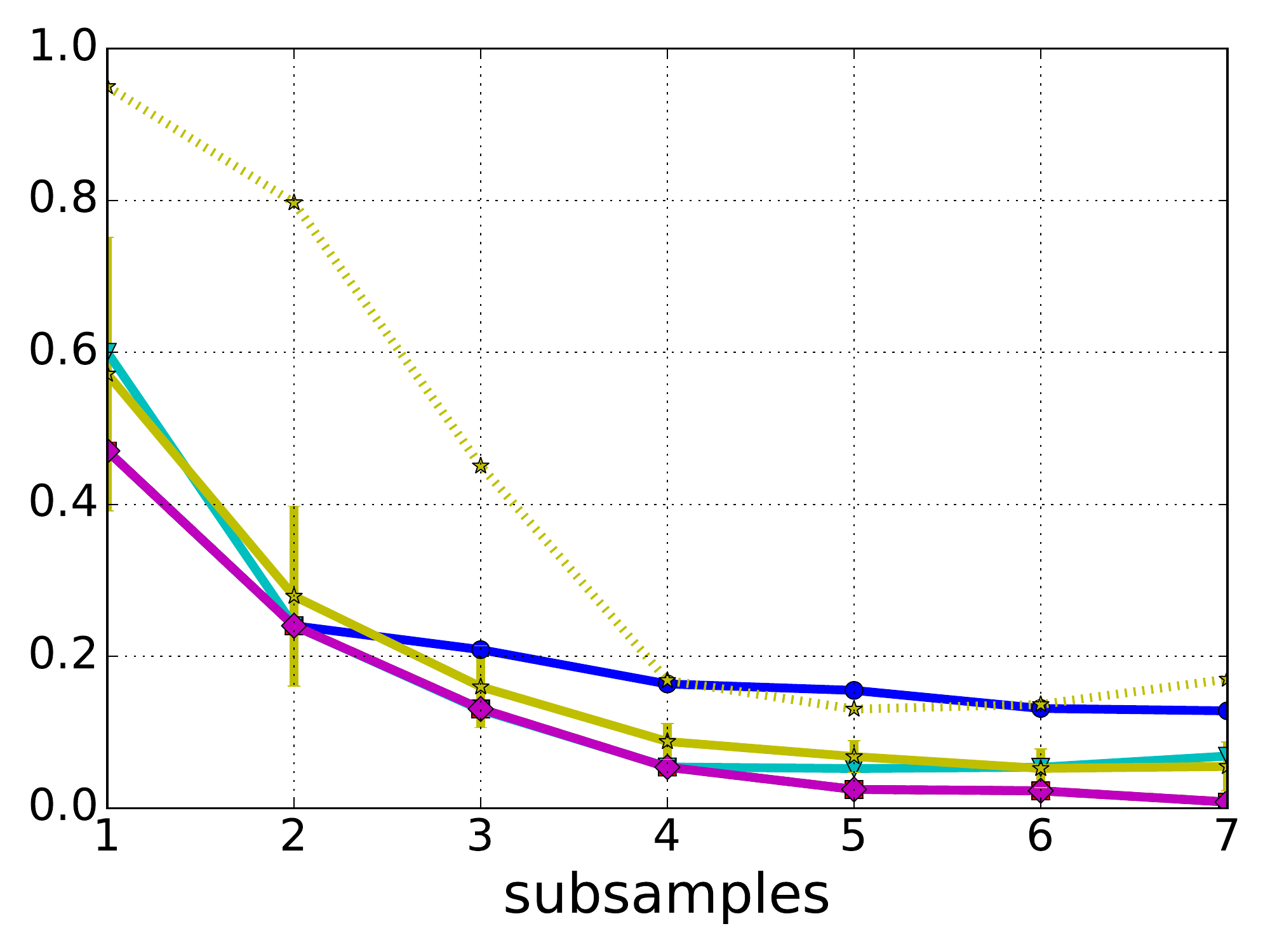} \\
\phantom{A} \hspace{.8in} \textbf{Low-fidelity model} \hfill \textbf{High-fidelity model} \hspace{.8in}
 \caption{
	\textbf{Burgers equation:} Reconstruction error among subset methods for solutions to Burgers equation.
	\emph{Left:} the reconstruction error of the rest of the low-fidelity simulation dataset when using the indicated subset chosen using various methods.  
	\emph{Right:} the reconstruction error but for the high-fidelity simulations with the subset chosen using the low-fidelity samples.
}
\label{fig:burgers:projerr}
\end{figure}

%%%%%%%%%%%%%%%%%%%%%%%%%%%%%%%%%%%%%%%%%%%%%%%%%%%%%%
\subsection{Double pendulum}
We consider a classic double pendulum problem.  
A similar problem was also considered in \cite{narayan2013stochastic}, and we use the same setup and assumptions here.
The problem is parameterized by the two pendulum angles, $\theta_1, \theta_2$, the lengths of the pendulums, $\ell_1, \ell_2$, and the mass of each pendulum, $m_1,m_2$, as well as the gravity coefficient $g$.
The high fidelity model corresponds to the solution of Equation (6.5) in \cite{narayan2013stochastic} parameterized by $(m_2,\ell_2)$ using a strong stability preserving Runge-Kutta method.
The low fidelity model uses the same parameterization but a corresponding linear approximation, see Equation (6.7) in \cite{narayan2013stochastic} for details.
In contrast to \cite{narayan2013stochastic}, we use a Euclidean inner product for the examples shown here.

\subsubsection{Setup}
For the high-fidelity model, we use a time step $\Delta t= 10^{-2}$ until $T=15$, resulting in a high-fidelity time-series vector of size $1501$.
The low-fidelity space uses $\Delta t = 0.25$ until $T=15$ resulting in a $61$-dimensional time-series vector.
The uncertain parameter $m_2$ is sampled along the interval $[0.25,0.75]$, and $\ell_2$ is sampled from the range $[0.25,4]$.
We sample the low-fidelity model using $20$ uniform grid samples along each parameter range, resulting in $400$ simulations.

\subsubsection{Analysis}
For this example, we observed a much larger gap between the lowest and highest performers.
This gap indicates that subset selection choice matters more for this problem, probably due to the more strongly non-linear relationship exhibited here than in the Burgers example above.

The error observed in the low-fidelity model, shown in Figure~\ref{fig:pendulum:projerr} on the left, demonstrates that the GOMP-based approach achieves superior performance in almost every subset size, until the number of samples because so large the choice in subset selection does not matter.
Using the low-fidelity subset choices in computing reconstructions for the high-fidelity model, shown on the right side of Figure~\ref{fig:pendulum:projerr}, achieves similar improvements.
In the high-fidelity case, the model errors are much larger making any gains more important.

A random subset selection does quite well on average in comparison to these deterministic approaches.
\edits{However, we emphasize again that in worst-case a random subset can do quite poorly, as shown in the results, which makes it a risky strategy for simulation allocation.}
However we note that rarely would making the allocation of a single high-fidelity simulation based purely on a random selection make sense, due to the risk of any specific subset performing quite poorly.
While in expectation the random case performs well, any specific random subset may not unless the subset size becomes large enough that the subset selection algorithm is no longer important.
However, having arbitrarily large numbers of high-fidelity simulations is typically not possible for real-world situations.

\begin{figure}[tb]
 \centering 
 \includegraphics[width=.49\textwidth]{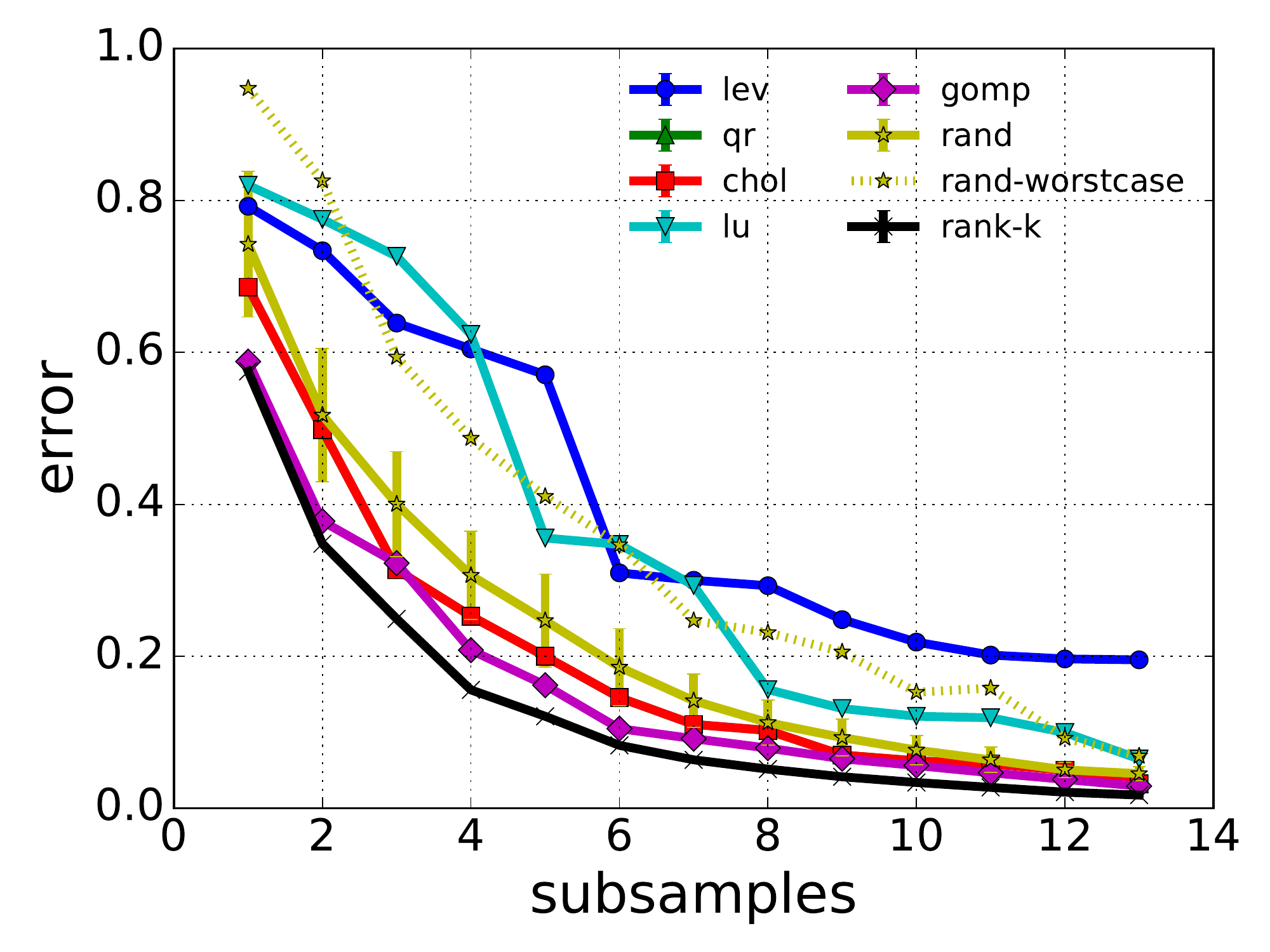} 
 \includegraphics[width=.49\textwidth]{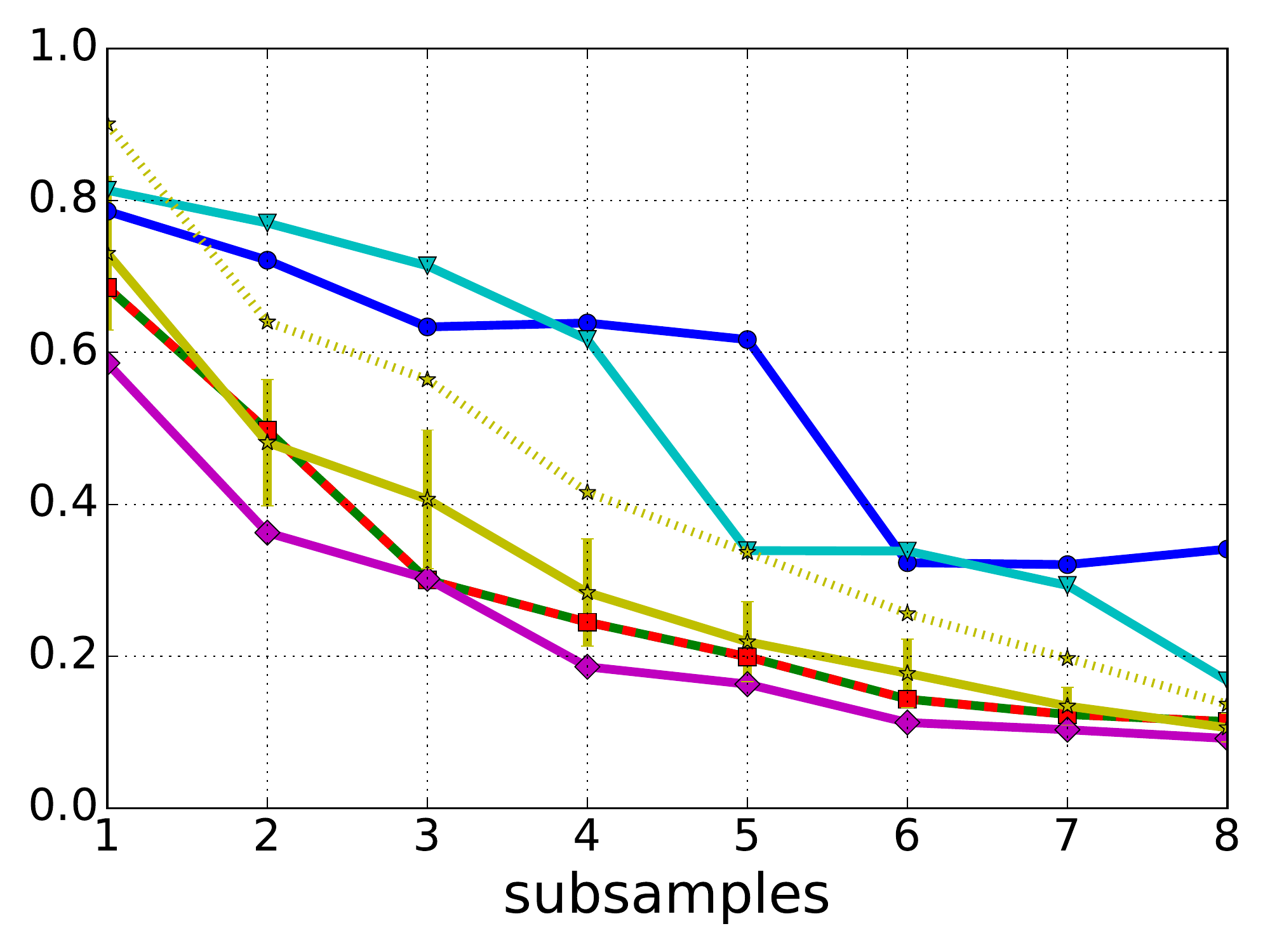} \\
\phantom{A} \hspace{.8in} \textbf{Low-fidelity model} \hfill \textbf{High-fidelity model} \hspace{.8in}
 \caption{%
	\textbf{Double pendulum:} Reconstruction error among subset method for solutions to double pendulum problem.
	\emph{Left:} the reconstruction error of the rest of the low-fidelity simulation dataset when using the indicated subset chosen using various methods.  
	\emph{Right:} the reconstruction error but for the high-fidelity simulations with the subset chosen using the low-fidelity samples.
}
\label{fig:pendulum:projerr}
\end{figure}

%%%%%%%%%%%%%%%%%%%%%%%%%%%%%%%%%%%%%%%%%%%%%%%%%%%%%%
\subsection{Compressible flow simulation}
Compressible flow simulations are used to study how fluids act while flowing in and around obstacles.
Understanding these situations in detail is vital in many domains, especially \edits{in engineering scenarios where compressibility effects are important.
%For example, simulating flow of air around a building helps inform designers of the effects on the surrounding environment, including pedestrians, cars, trees, and other buildings.
We present here results from a 2D compressible flow simulation around a cylindrical object.}
The simulation finds a solution to the compressible Navier-Stokes equation, which for two dimensions can be written as:
\begin{equation}
\frac{\partial q}{\partial t} + \frac{\partial f}{\partial x} + \frac{\partial g}{\partial y} = 0,
\end{equation}
where $q$ is a vector of conserved variables, $f = f(q,\nabla q)$ and $g=g(q,\nabla q)$ are the vectors of the fluxes.
These can be rewritten as:
\begin{equation}
f = f_i - f_v, g = g_i - g_v,
\end{equation}
where $f_i, g_i$ are the inviscid fluxes \edits{as given in \cite{KaSh05}} and $f_v, g_v$ are the corresponding viscous fluxes.
The viscous fluxes take the form,
\begin{equation}
f_v = \left( \begin{matrix} 0 \\ \tau_{xx} \\ \tau_{yx} \\ u \tau_{xx} + v \tau_{yx} + k T_x \end{matrix} \right),\
g_v = \left( \begin{matrix} 0 \\ \tau_{xy} \\ \tau_{yy} \\ u \tau_{xy} + v \tau_{yy} + k T_y \end{matrix} \right),
\end{equation}
where $\tau$ is the stress tensor,
\begin{equation}
\tau_{xx} = 2 \mu (u_x - \frac{u_x + u_y}{3}), \tau_{yy} = 2 \mu (v_y - \frac{u_x + u_y}{3}), \tau_{xy} = \tau_{yx} = \mu (v_x + u_y),
\end{equation}
with $\mu$ the dynamic viscosity and $k$ the thermal conductivity.
The corresponding solution is well known and we refer the reader to classic texts for appropriate background, such as \cite{munson1990fundamentals}.

We are specifically interested in two uncertain parameters, the Reynolds number, $Re$, which relates the velocity of the fluid to the viscosity of the fluid, and the Mach number, $Ma$, which relates the speed of the fluid to the local speed of sound.
We ran the simulations using the compressible flow module of the Nektar++ suite \cite{cantwell2015nektar++}.
In the simulation, we use the following relationships between the Reynolds number and dynamic viscosity, and between the Mach number and the \emph{farfield} velocity:
\begin{equation}
\mu = \frac{\rho_{\infty} u_{\infty}}{Re}, \,\,\,\,\,\,u_{\infty} = Ma \sqrt{\frac{\gamma p_{\infty}}{\rho_{\infty}}},
\end{equation}
where $p_{\infty}, \rho_{\infty}, u_{\infty}$ denote the \emph{farfield} pressure, density, and x-component of the velocity, respectively, and $\gamma$ is the ratio of specific heats.
An example of a single time point in one of the simulations is shown in the left side of Figure~\ref{fig:cylinder:flow}.

\begin{figure}[tb]
\centering
\includegraphics[width=.45\textwidth]{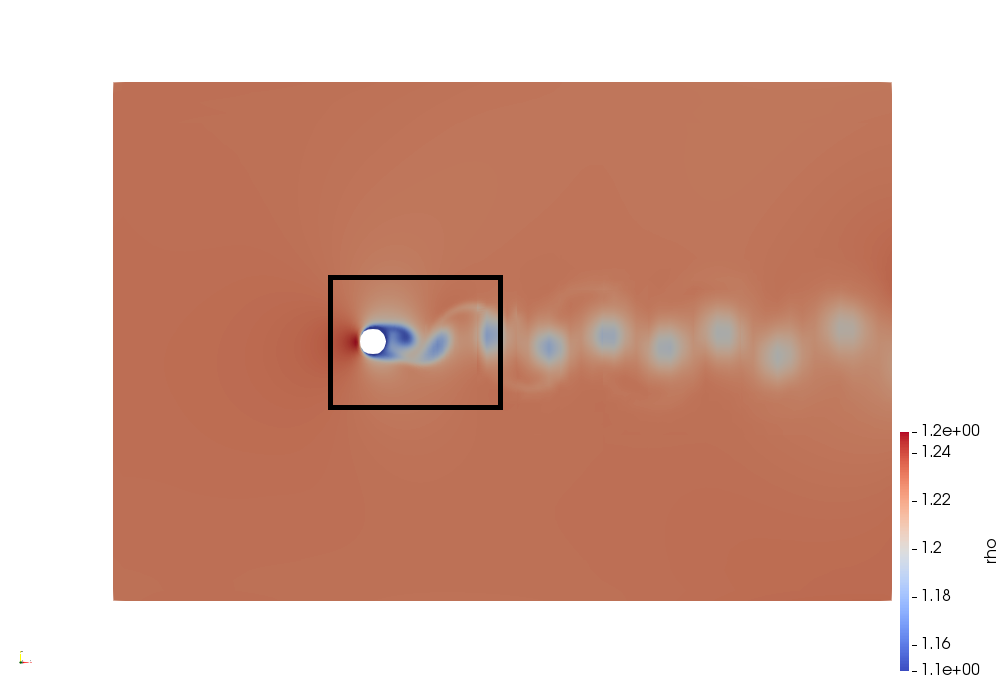}
\hspace*{.2in}
\raisebox{0.15\height}{\includegraphics[width=.30\textwidth]{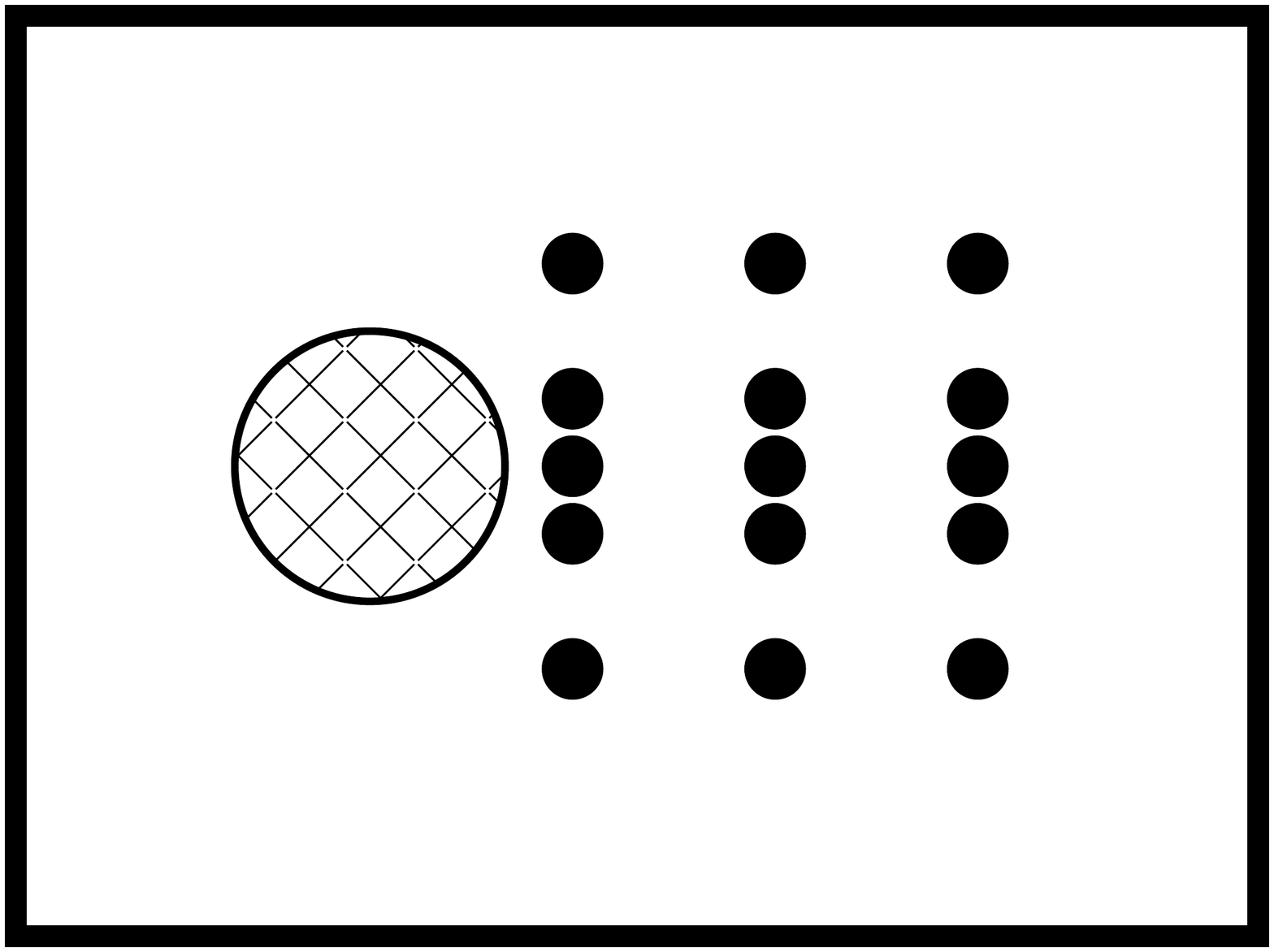}}
\caption{%
\textbf{Compressible flow simulation}.
\textit{Left:} An example of one of the compressible flow simulations used in this experiment, showing the entire field of mass-density ($\rho$) values.
The simulation models a fluid flowing past a cylinder, we are specifically interested in the mass-density and energy values that emerge in the immediate wake of the cylinder.
\textit{Right:} A zoomed-in diagram showing the location of the 15 sample points taken from each simulation for the analysis.  
These points were specifically positioned in the immediate wake of the cylinder.
The location of this diagram is shown by the rectangle in the image on the left.
}
\label{fig:cylinder:flow}
\end{figure}

\subsubsection{Setup}
We evaluated the Reynolds number, $Re$, and Mach number, $Ma$, parameters at regular intervals with $5$ samples between $200$ and $450$, and $6$ samples between $0.2$ and $0.45$ respectively.
For a single parameter pair, we ran the simulation for $1 \times 10^7$ steps to achieve $1$ unit of non-dimensionalized simulation time, and for each time-step we recorded $4$ values at $15$ spatial locations in the immediate vicinity of the cylinder.
The spatial locations are shown visually in Figure~\ref{fig:cylinder:flow}.
We are primarily interested in the fluid flow simulation after it transitions into a steady shedding regime, so we ignore any of the time steps in the startup regime, which for this problem we observed to be at 30\% of the steps, or $4 \times 10^4$ steps.
This results in a feature vector with $6 \times 10^4$ time-steps and $15 \times 4 = 60$ values for each step.

Once the simulation transitions from the transient regime, each of these simulation sample points are periodic signals whose form depends nonlinearly on the parameters chosen.
To align the signals we detect the period length, find the cyclic peak, and translate all signals so that the first peak aligns.
We then crop all signals to a single period size.
The cropping was done to ensure a nice decay of singular values in the ensemble.
A sample of a single positional point and variable but for various Reynolds and Mach numbers is shown in Figure~\ref{fig:cylinder:signal} for reference.
The corresponding Hilbert space is induced by the inner produce of these period length signals.

We ran both the low- and high-fidelity simulations over the specified parameter values in order to compute the proxy approximation error using the various subset selection methods.

\begin{figure}[tb]
\centering
\includegraphics[width=.9\textwidth]{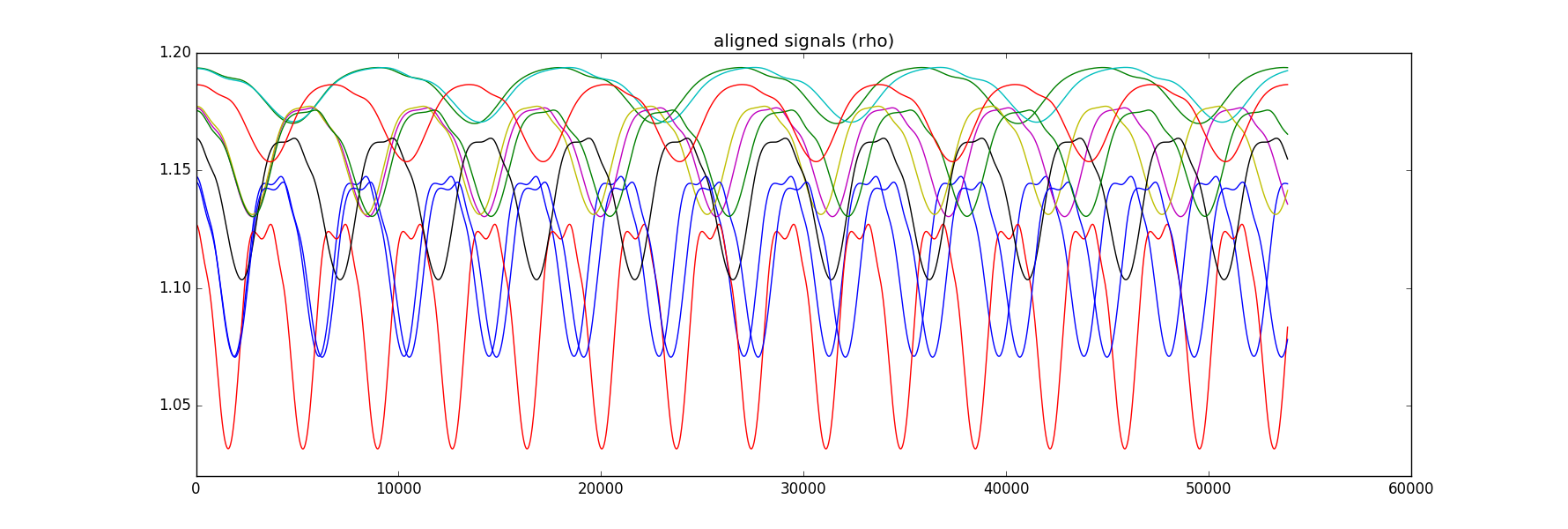}\\
\includegraphics[width=.9\textwidth]{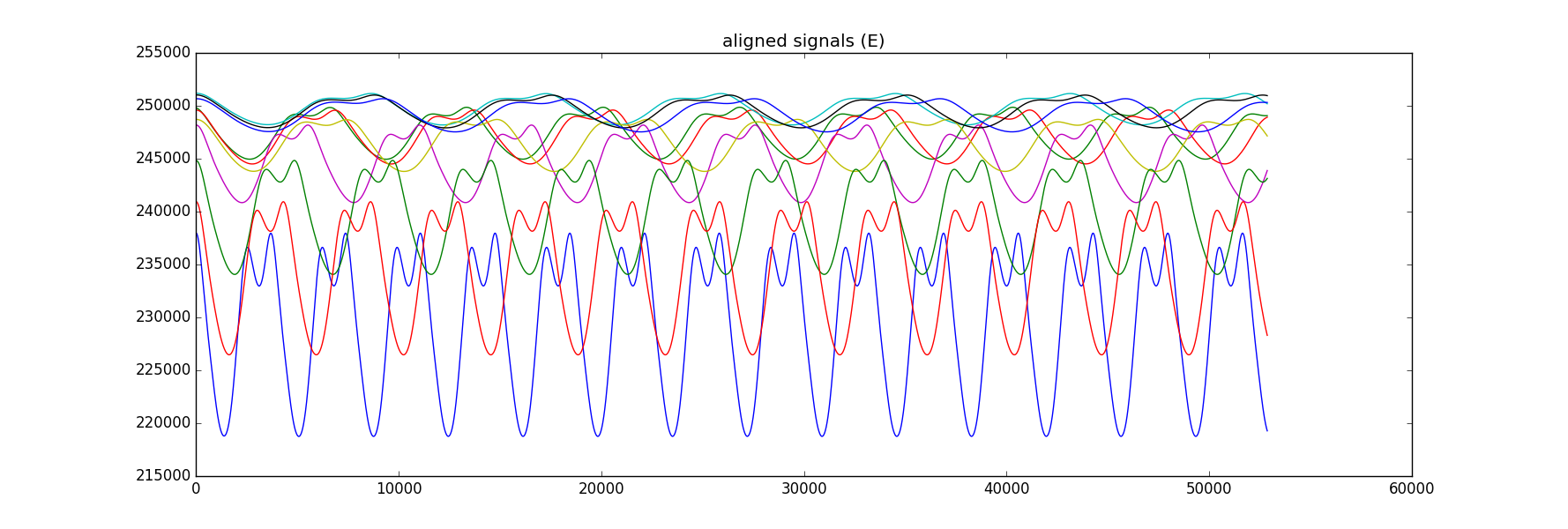}\\
\caption{%
\textbf{Compressible flow simulation point samples over time}.
\textit{Top:} Samples of the mass-density field at a single positional point $(1.5,0.0)$, which is located in the immediate wake of the cylinder in Figure~\ref{fig:cylinder:flow}.  The differing signal shapes are due to changes in the Reynolds and Mach numbers of the simulation, the figure shows 10 randomly samples Reynolds-Mach pairs.
\textit{Bottom:} Same as above but from the Energy field.
}
\label{fig:cylinder:signal}
\end{figure}

\subsubsection{Analysis}
The results of the comparison in terms of how each of the subset selection methods perform is shown in Figure~\ref{fig:cylinder:projerr}.
Here we observe slightly differing results in the high-fidelity vs low-fidelity errors.
In the low-fidelity case the methods perform similar to previous experiments.
The GOMP, Cholesky, QR and leverage methods all perform the same after a certain number of samples, and all perform better than random sampling.
For the high-fidelity results we observed that Cholesky and QR perform similarly to random, while GOMP performs the best.
One of the primary differences between GOMP and QR is that GOMP performs selection based on residual correlation in the Hilbert space, while QR uses residual magnitude.
These results indicate that the correlation becomes more indicative than magnitude of high-fidelity sample importance for this type of simulation data; this is another reason to consider GOMP when choosing the subset selection technique.

\begin{figure}[tb]
 \centering 
 \includegraphics[width=.49\textwidth]{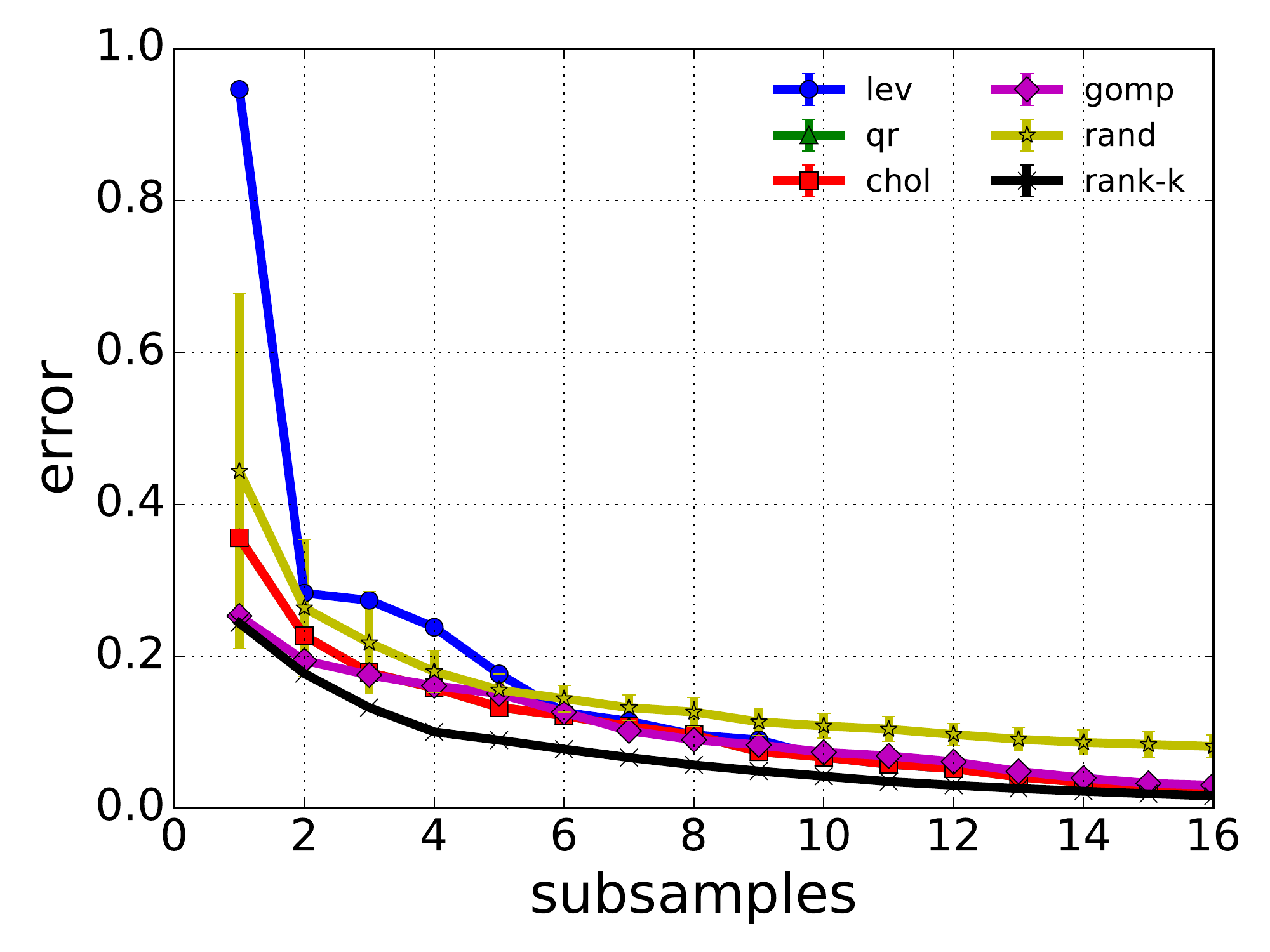} 
 \includegraphics[width=.49\textwidth]{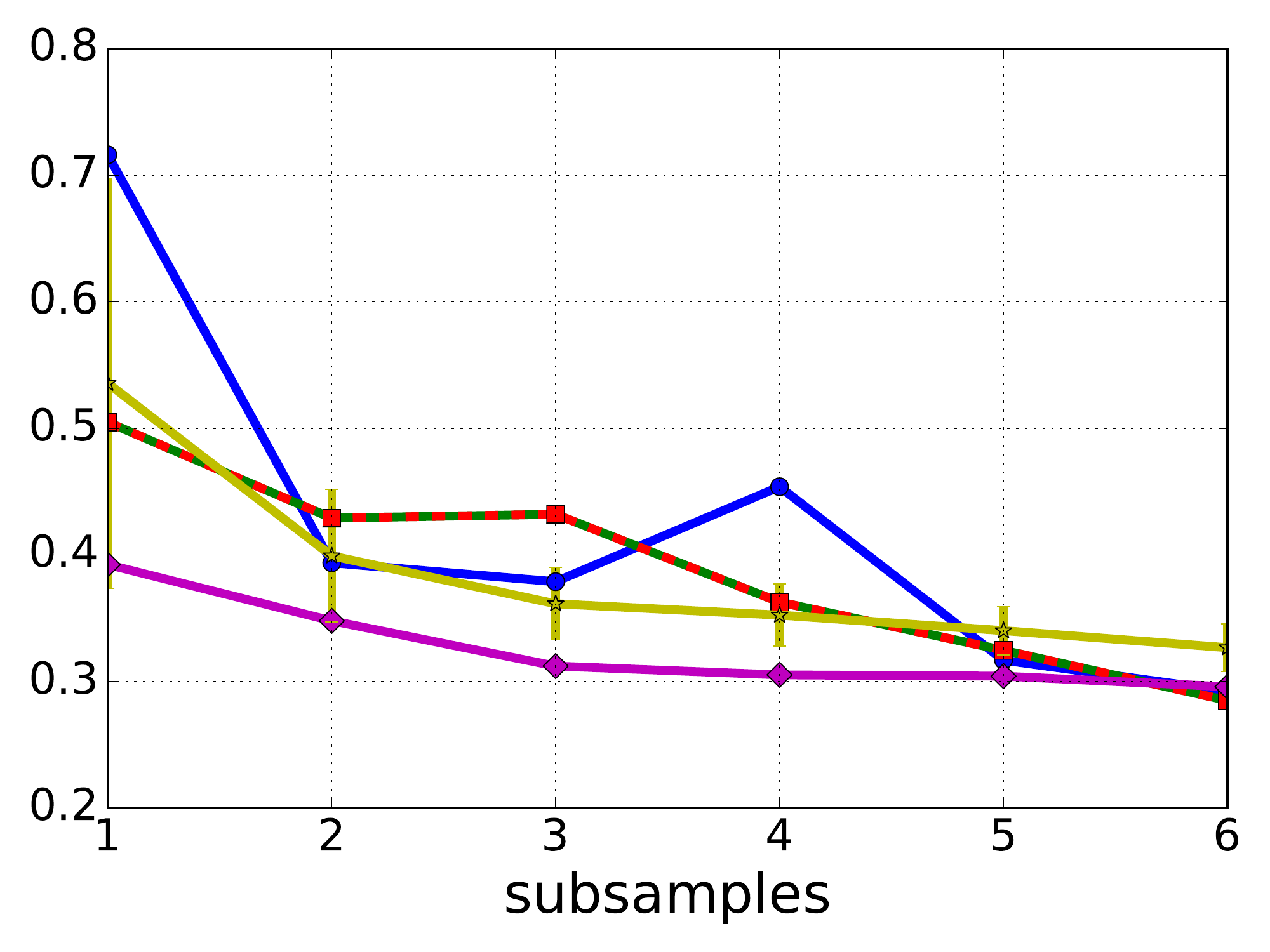} \\
\phantom{A} \hspace{.8in} \textbf{Low-fidelity model} \hfill \textbf{High-fidelity model} \hspace{.8in}
 \caption{%
	\textbf{Projection error among subset method for solutions to compressible flow problem.}
	\emph{Left:} the reconstruction error of the rest of the low-fidelity simulation dataset when using the indicated subset chosen using various methods.  
	\emph{Right:} the reconstruction error for the high-fidelity simulations with the subset chosen using the low-fidelity samples.
}
\label{fig:cylinder:projerr}
\end{figure}

%%%%%%%%%%%%%%%%%%%%%%%%%%%%%%%%%%%%%%%%%%%%%%%%%%%%%%
\subsection{Structure topology optimization}
A structure topology optimization (STO) problem involves solving a non-convex optimization problem for the optimal material placement to satisfy a physical property such as stiffness or maximum stress.
STO has emerged as a powerful tool in designing various high performance structures, from medical implants and prosthetics to jet engine components \cite{sutradhar2010topological,reist2010topology,zhu2016topology}. 
The optimization shown here is a canonical STO problem in which the optimization finds the best material layout in the design space in order to maximize the stiffness (or equivalently minimizing the deflection or compliance) of the structure subject to a material volume constraint. The response of the structure for a set of loading and boundary conditions is typical computed/simulated via finite element method (FEM). The topology optimization yields a solution in the form of binary maps that indicate material placement.
\eedits{
  The low- and high-fidelity models are binary map outputs, see Figure \ref{fig:topo:diff} left. Our multifidelity procedure does not operate on the binary maps themselves, but instead on a signed distance transform of the binary maps. To perform linear approximations, we compute signed distance transform (SDT) fields from binary maps and use these fields in the linear reconstruction. The SDT fields can be thresholded to recover binary maps, see Figure \ref{fig:topo:diff} right for examples. All errors reported in this section are errors in the SDT fields. Figure \ref{fig:topo:diff} shows that thresholded SDT reconstructions can provide some macroscopic information about structures, but can lose a substantial amount of small-scale structure.
}
Additional details of the specific optimization and example implementations can be found in \cite{andreassen2011efficient}.

The problem considered here is parameterized by three variables, the vertical position of the loading, $p$, the loading angle, $\theta$, and the filter size, $\rho$, which controls the scale of contiguous blocks in the material placement.
This is summarized in Figure~\ref{fig:topo:diagram}.

\begin{figure}[tb]
\centering
\includegraphics[width=.49\textwidth]{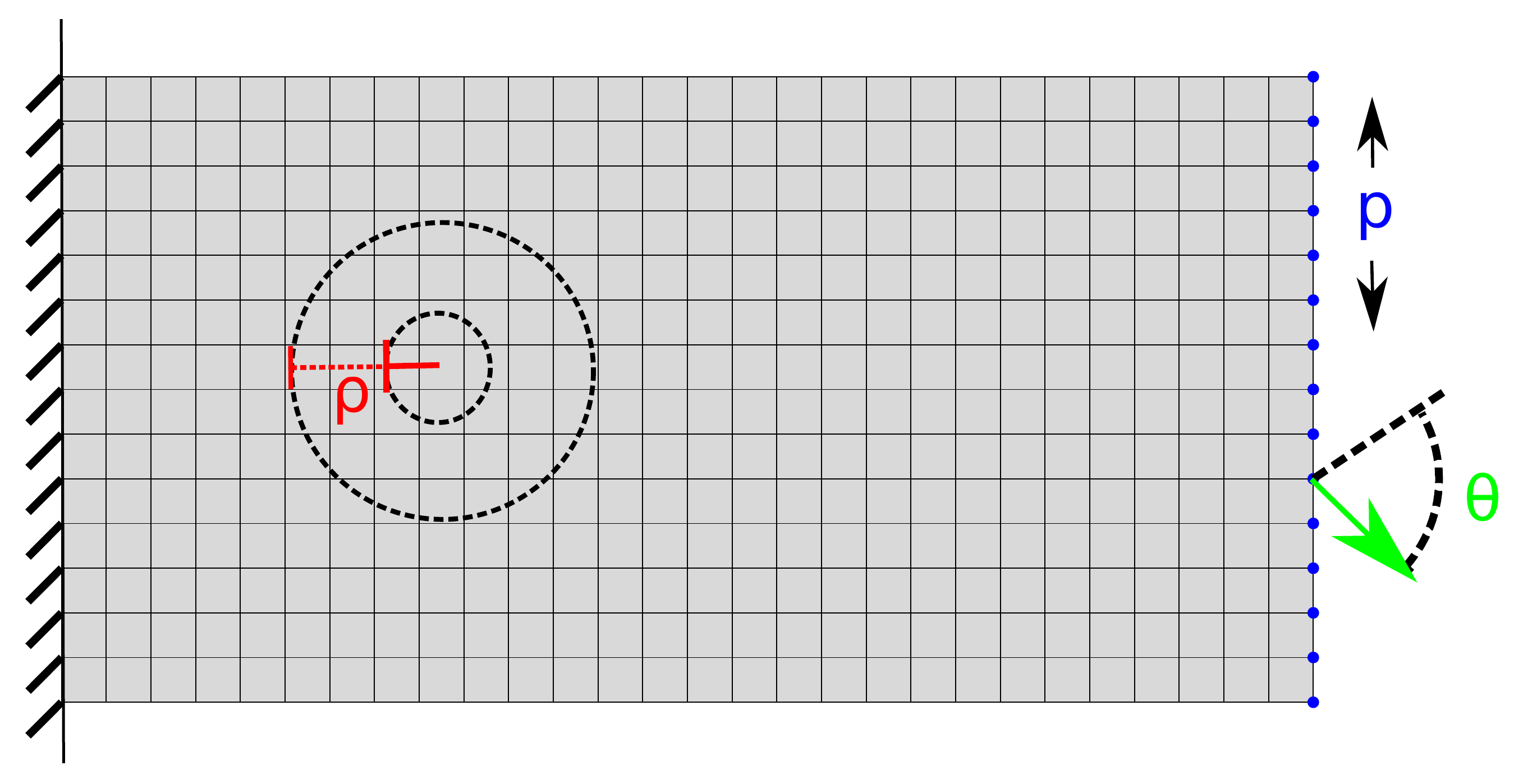}
\caption{%
\textbf{Structure topology optimization} diagram.
The loading on the right edge is parameterized by a vertical position, $p$, and a loading angle, $\theta$, and the filter size, $\rho$, which controls the length scale in the topology optimization.
}
\label{fig:topo:diagram}
\end{figure}

\subsubsection{Setup}
The topology optimization takes place on a 2D rectangular grid of size $n_{\text{row}} \times n_{\text{col}}$.
For this experiment, the position variable, $p$, was restricted to $p \in [-0.5n_{\text{row}},0.5n_{\text{row}}]$ corresponding to the extent of the right boundary depicted in Figure~\ref{fig:topo:diagram}.
The angle parameter was restricted to $\theta \in [0,\pi]$ corresponding to the full range between pointing straight down to straight up.
The filter parameter was restricted to $\rho \in [1.1z,2.5z]$ corresponding to the scale of features allowed in the final simulation, where $z$ is the appropriate ratio coefficient depending on the resolution $s_x,s_y$.
We sampled the three parameter spaces in a uniformly random way until $1000$ sample triples were found.
Those triples were then used to run $1000$ separate topology simulations.
Low-fidelity corresponds to the problem solved with $n_{\text{row}} = 40, n_{\text{col}} = 80, z=1$, and high-fidelity corresponds to the problem solved on a much larger region, $n_{\text{row}} = 80, n_{\text{col}} = 160, z=2$.

\subsubsection{Analysis}
Our analysis takes into account both the low-fidelity and high-fidelity spaces.
Specifically we analyze how well the subset selected by each method works to reconstruct the remaining low-fidelity samples.
We also consider how the method reconstructs the high-fidelity samples using the same subset.

We observed that the GOMP approach performed the best for any subset size examined.
The QR and Cholesky methods also performed quite well in the low-fidelity reconstruction error.
The average of random method performed relatively well initially, but then all other method perform better at larger subset sizes.

The high-fidelity case was more different for this dataset than in the previous simulations examined.
Specifically we note the GOMP maintains the position as the best performer, however it also gains a considerable edge in the larger subset sizes.
Surprisingly, Cholesky and QR perform well at first, but then perform worse as the subset size grows.
The average of random subsets does better than the LU and leverage approaches.
The performance comparison among the different methods is summarized in Figure~\ref{fig:topo:projerr}.

The poorer performance of QR for larger subset sizes, we hypothesize, is due to the larger set of differences between the low- and high-fidelity solutions in this dataset.
Because of the non-convex nature of the topology optimization problem, we observe some larger differences in the various solutions, an example is shown in Figure~\ref{fig:topo:diff}.
We hypothesize that because GOMP bases the decision of using a subset element largely on overall correlation with the remaining data point, rather than simple residual error magnitude, it is able to capture the more important trends of the samples which carry over to the high-fidelity case.

\begin{figure}[tb]
 \centering 
 \includegraphics[width=.3\textwidth]{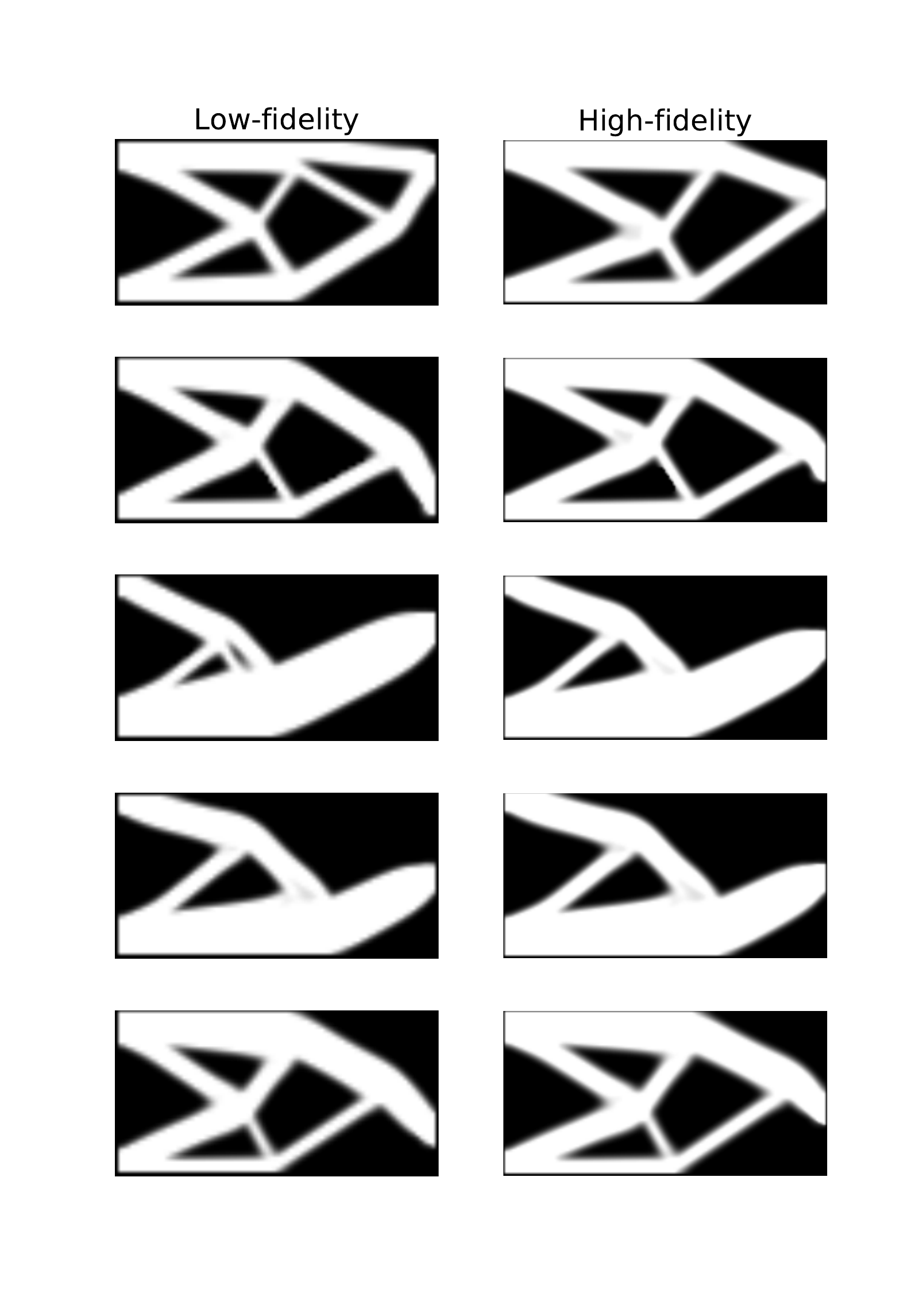}
 \includegraphics[width=.69\textwidth]{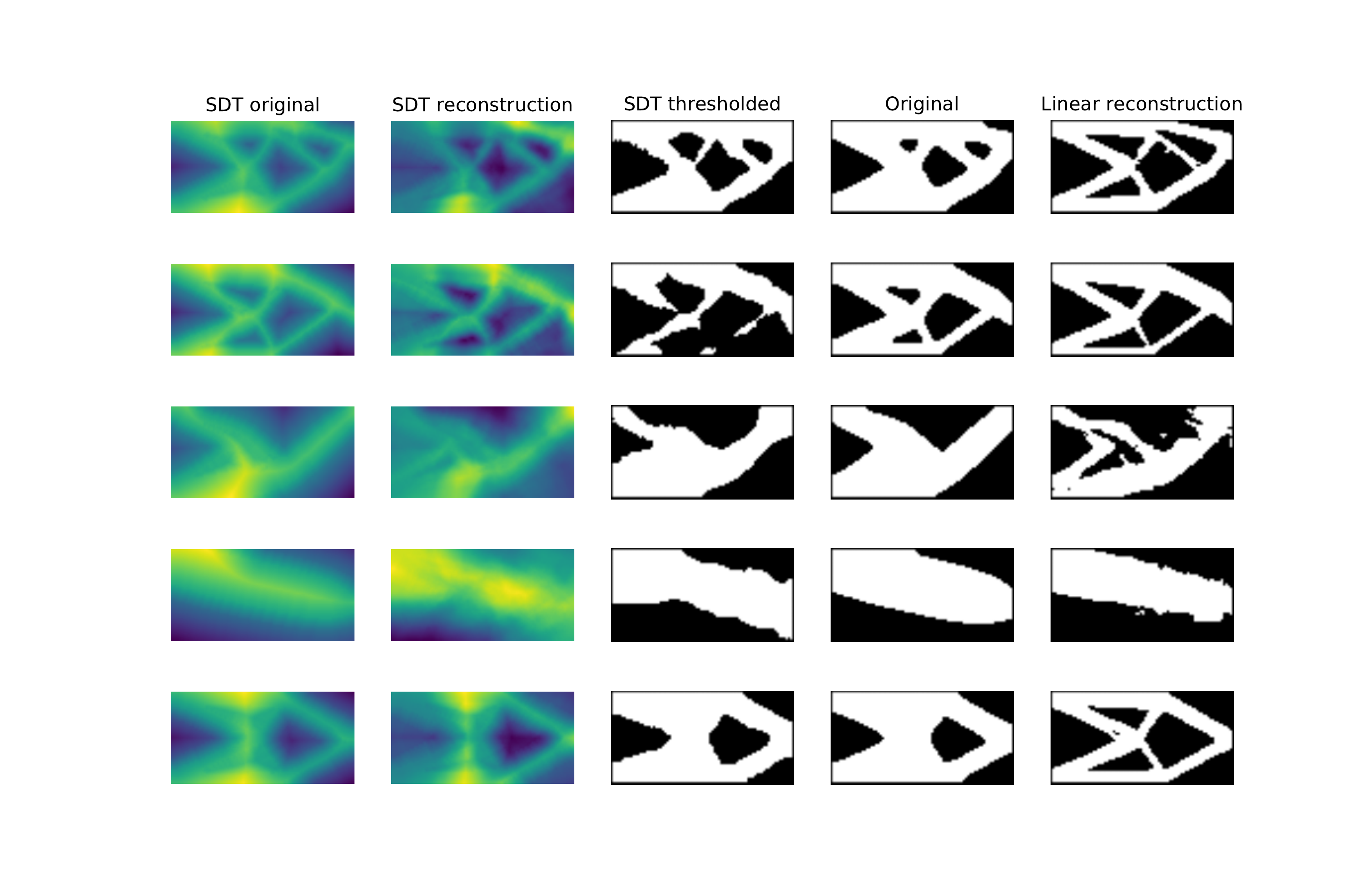}
 \caption{%
   \textbf{Structure topology optimization} \eedits{
        \textit{Left 2 columns:} Binary maps output from both low- and high-fidelity solutions.
   Each row shows the low- and high-fidelity solutions using the same parameters, note the variety of changes in the solution just by moving from a lower resolution domain to a higher resolution domain. \newline
     \textit{Right 5 columns:} Illustration of signed distance transform (SDT) fields resulting from binary maps output from the optimization process. The first column shows the SDT fields from the original binary maps (fourth column). The second column shows the SDT reconstruction, which is subsequently thresholded to obtain the binary structure (fifth column). For reference, the third column shows thresholding of the first-column SDT field.
 }
}
\label{fig:topo:diff}
\end{figure}

\begin{figure}[tb]
 \centering 
 \includegraphics[width=.49\textwidth]{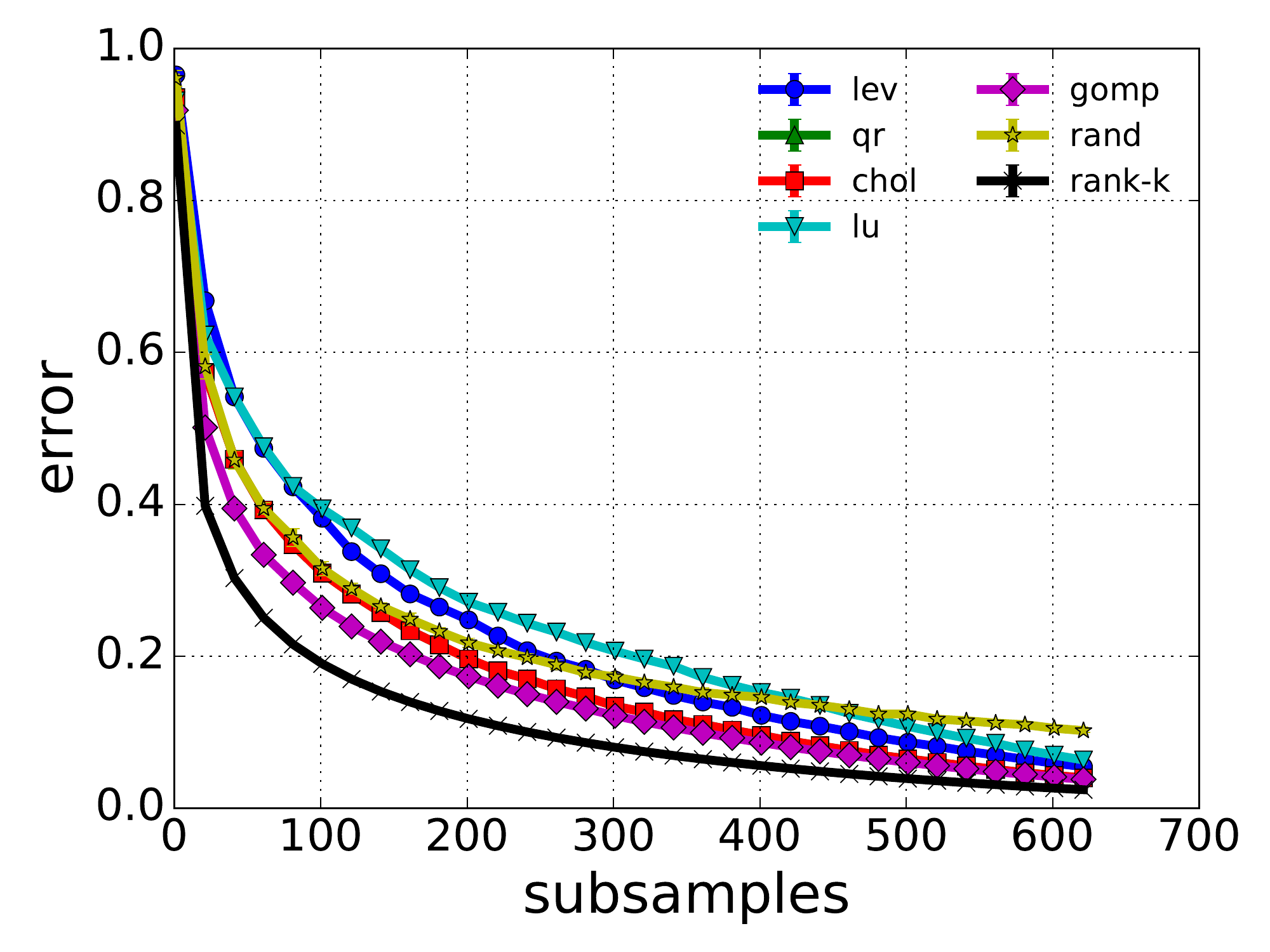} 
 \includegraphics[width=.49\textwidth]{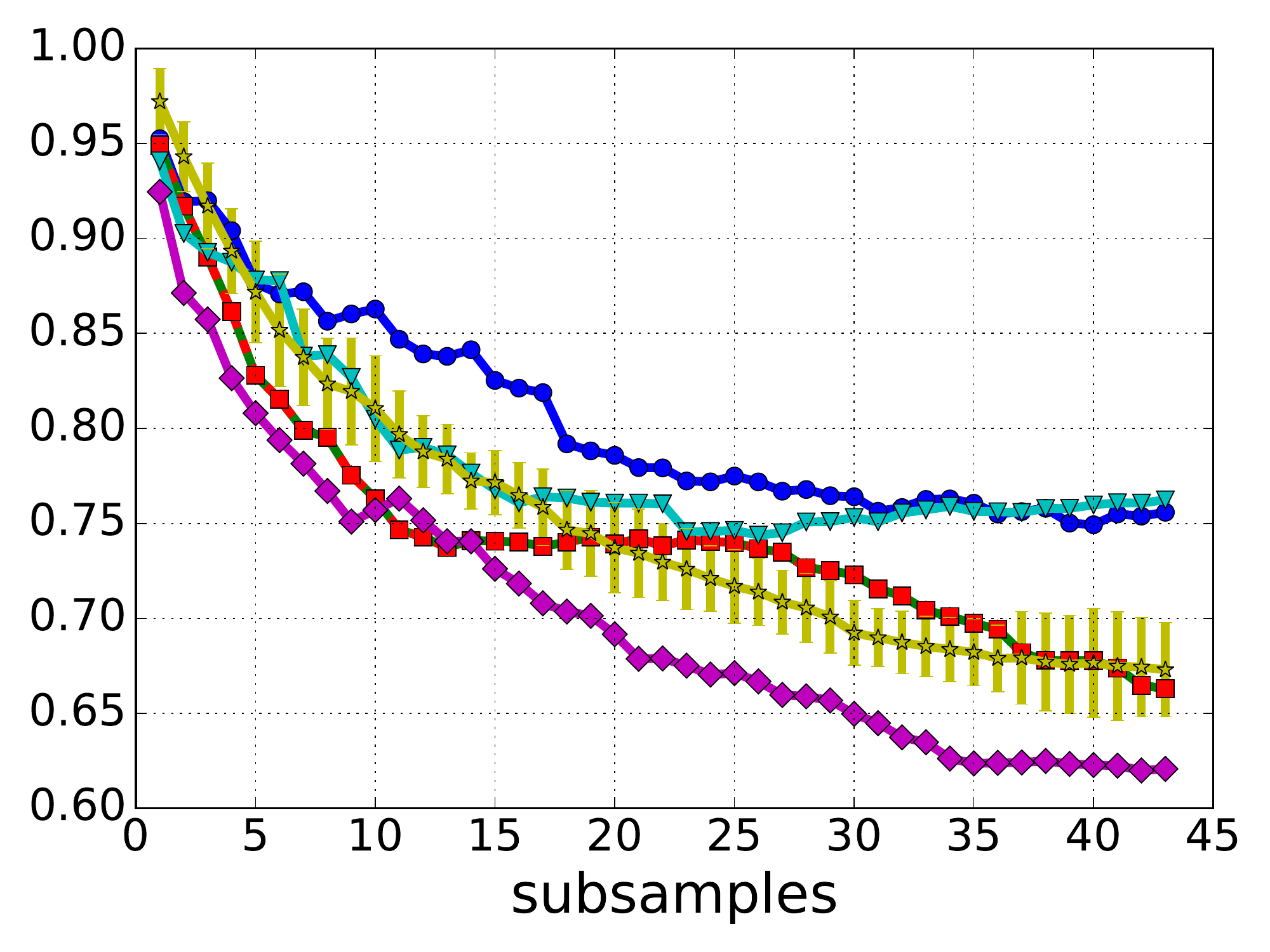} \\
\phantom{A} \hspace{.8in} \textbf{Low-fidelity model} \hfill \textbf{High-fidelity model} \hspace{.8in}
 \caption{%
	\textbf{Structure topology optimization:} Reconstruction error among subset methods.
	\emph{Left:} the reconstruction error of the rest of the low-fidelity simulation dataset when using the indicated subset chosen using various methods.  
	\emph{Right:} the reconstruction error for the high-fidelity simulations with the subset chosen using the low-fidelity samples.
}
\label{fig:topo:projerr}
\end{figure}

Our previous analysis has focused on the subset size.
A larger subset size for the same error ultimately translates into a higher runtime cost to obtain the same approximation error.
To make this relationship more clear, we also compared the methods directly using simulation runtime and reconstruction error.

A popular alternative approach to generating proxy functions for applications in uncertainty analysis of simulations is to use a Gaussian process (GP) regression to estimate unseen simulations results (see \textit{e.g.} \cite{le2013multi,perdikaris2015multi,perdikaris2017nonlinear}).
However, a GP approach relies on a large number of high-fidelity simulations from which to derive the proxy function.
This is directly at odds with computationally constrained high-fidelity problems.
We illustrate this point by including a GP proxy using the same, \emph{small} number of samples used in the multifidelity approach.

We also consider an alternative neural-network-based proxy-method proposed specifically for topology optimization in \cite{ulu2016data}.
In \cite{ulu2016data}, the authors proposed training a standard feed-forward neural network, also known as a multilayered perceptron (MLP), as a proxy for the exact solution given the problem constraints.
They first reduce the dimension of the dataset to an $80$-dimensional space using PCA, and then train the MLP to regress to the PCA weights given the problem constraints.
We implemented their approach and get comparable results for the same number of solution designs used in their training ($400$).
However, this approach also assumes the availability of sufficient data to train the model, which is not always the case in the high-fidelity models in the wild.
To compare we used both an MLP based approach with no PCA reduction, and an MLP training on the PCA weights where the space was chosen to capture 90\% of the singular value energy.
The results are summarized in Figure~\ref{fig:topo:cost}.
The left side of Figure~\ref{fig:topo:cost} compares all the methods on a log-scale.  
Because of the lack of training data, the MLP model performs quite poorly.
The GP model performs much better initially than the MLP model on the fewer number of samples, but after reaching between $10--15$ samples the two become quite similar in performance.
Both the GP and MLP models accrue considerable error because the limited samples do not sufficiently represent the high fidelity space. 
\edits{The right side of Figure~\ref{fig:topo:cost} shows only the multifidelity methods on a linear-scale.  
Note that it is difficult to directly compare results here with those in Figure~\ref{fig:topo:projerr}, because this plot shows an average cost and runtime over $50$ smaller datasets of size $500$ each, which results in much smaller error overall. However the relative performance of the methods is generally the same.}

In contrast the Gramian-weighted non-parametric proxy method introduced in \cite{narayan2013stochastic} performs well using only a small number of high-fidelity samples because it makes direct use of the low-fidelity structure through the low-fidelity Gramian.
In considering the choice in how the subset selection of the non-parametric proxy method is done, we found similar advantages in using a GOMP-based subset to the previous examples considered.
Specifically we note that given the same amount of high-fidelity simulation time we can improve the error considerably over previous methods.
\edits{Our comparison in Figure \ref{fig:topo:cost} between our multifidelity methods (lev, qr, chol, lu, gomp) and surrogate methods (MLP, MLP-pca, GP) is not quite a comparison on an even playing field: The cost of training the multifidelity and surrogate methods are comparable, but evaluating the multifidelity approximations requires a low-fidelity evaluation, which can be more expensive than evaluation of a neural network or GP surrogate. However, this example illustrates that in our situation neural networks and GP's cannot be trained on only high fidelity data with reasaonble cost. In scenarios when the high-fidelity model is expensive, it is thus more difficult to accurately train surrogate models, and it is in this regime when the multifidelity approximation can be useful.}

\begin{figure}[tb]
 \centering 
 \includegraphics[width=.45\textwidth]{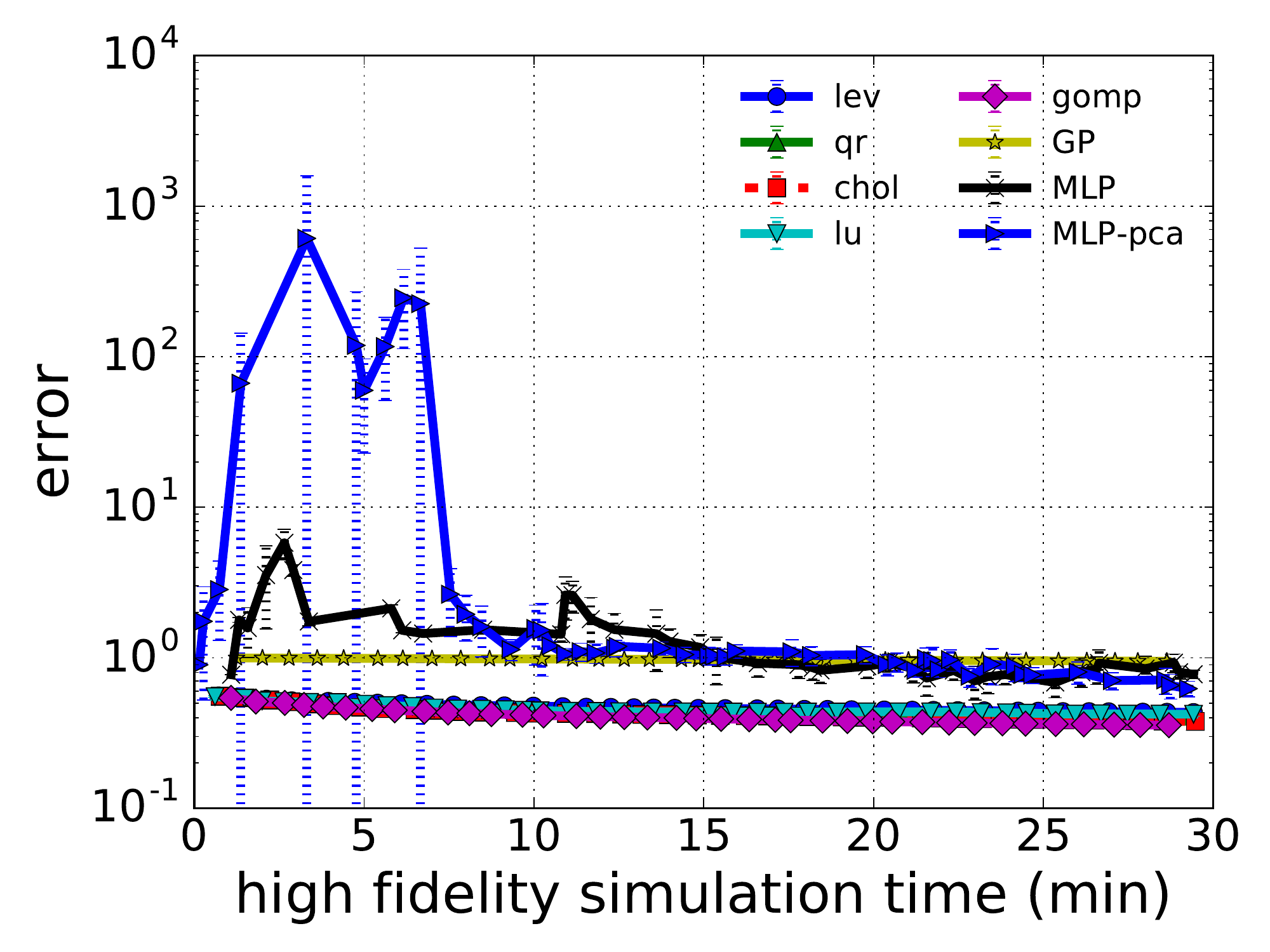} 
 \includegraphics[width=.45\textwidth]{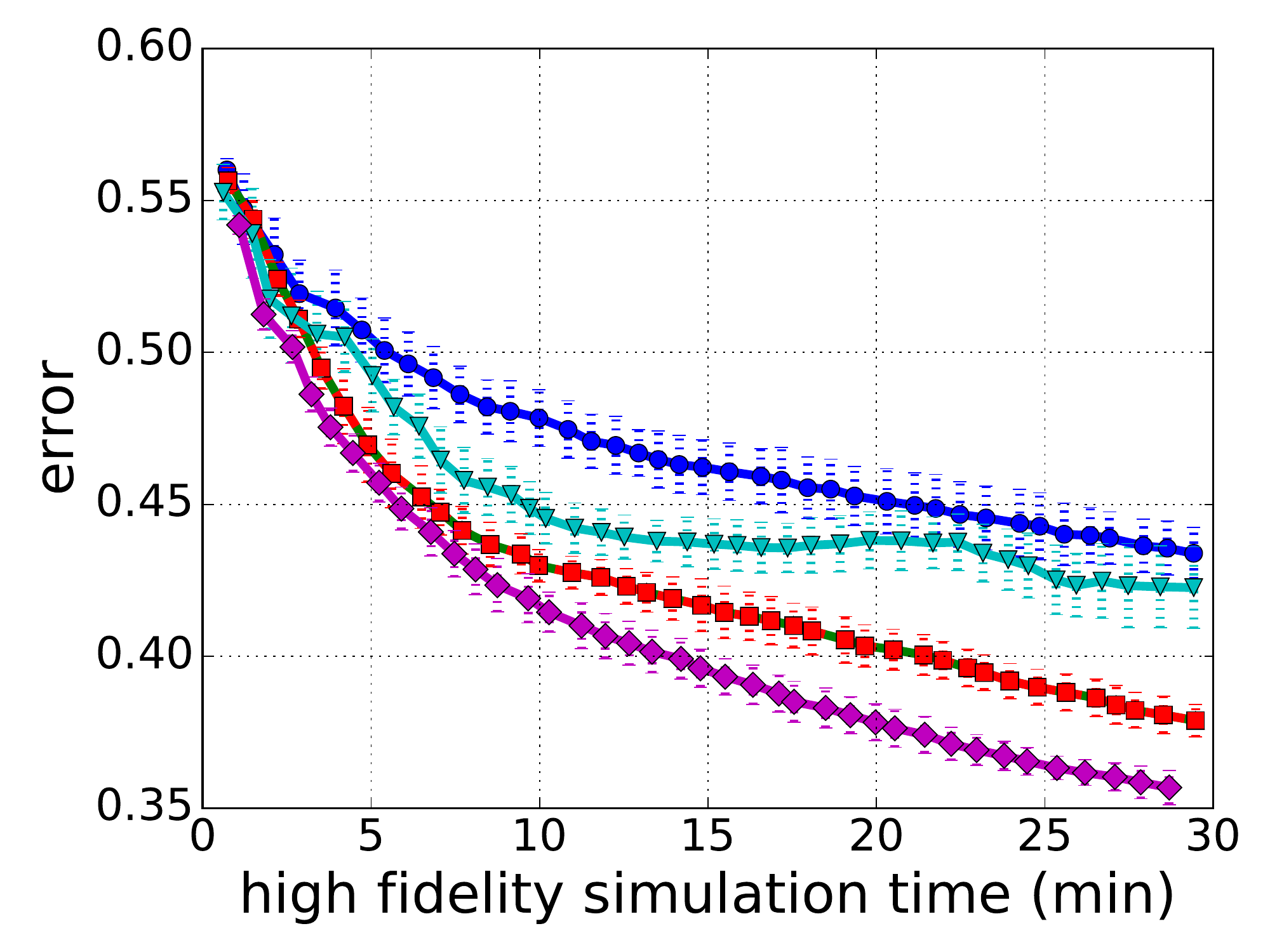} 
 \caption{%
	\textbf{Structure topology optimization} 
\edits{
    Average reconstruction error (vertical axis) versus high-fidelity simulation runtime (horizontal axis) among subset methods over an ensemble of $50$ random datasets containing $500$ samples each; the standard deviation of the ensemble error is shown in dashed error bars around each point. The ``error" is the relative $\ell^2$ vector error averaged over all samples.
   From left to right each point indicates the number of high-fidelity simulations used to obtain the approximation and the vertical and horizontal location is the average error and average time to simulation, respectively, over the $50$ dataset realizations for that number of simulations.
    Left: A log-scale comparison including a Gaussian process (GP), multilayered perceptron (MLP), and an MLP training on PCA projections (MLP-pca) similar to \cite{ulu2016data}.
    Right: A linear-scale view of the same results but only showing the multifidelity techniques.
}
}
\label{fig:topo:cost}
\end{figure}

There are some significant differences between the observed reconstruction error performance among the different subset methods.
For the topology optimization problem, these differences can be seen easily by viewing the subsets of solutions chosen and considering the differences.
Figure~\ref{fig:topo:samples} shows the first $15$ subsets chosen by each of the methods in order of selection.
Note that QR and Cholesky chose the same subsets for this dataset, and for brevity we only show QR.
All of the methods select a similar solution initially, one of the solid beam structure solutions.
This is consistent with our understanding of each of the methods, as the leverage method is selecting the sample with the most statistical leverage, QR, Cholesky, and LU are selecting the sample with the large residual, and GOMP is selecting the sample with the largest correlation with all the samples. 
Note how QR selects a sample with some slant, this probably results in a larger magnitude image, while GOMP selects a beam with a straight orientation, this is probably higher correlated with more samples.
The methods start to differentiate after a few samples: statistical leverage continues to select samples with similar large beam structures because those solutions also have a large statistical leverage score (leverage scores do not take into account prior samples like the other methods).
The order of the QR and Cholesky selections are interesting because, as the first sample was slanted one way, the residual error indicated a slant the other way was the next best according to the magnitude criteria.
After the slanted beams, other solutions with large beams in them as well as small lattices were chosen, probably because these also resulted in large residual magnitudes.
The GOMP approach, in contrast, selected two large beam solutions, and then next selected a solution made entirely of lattices, probably because that distribution of material more highly correlated with the remaining residuals.
We hypothesize that because GOMP uses residual correlation instead residual magnitude, it is able to do better in the high-fidelity space even if there is some differences between low- and high-fidelity solutions for the topology optimization problem.

\begin{figure}[tb]
 \centering 
 \includegraphics[width=.9\textwidth]{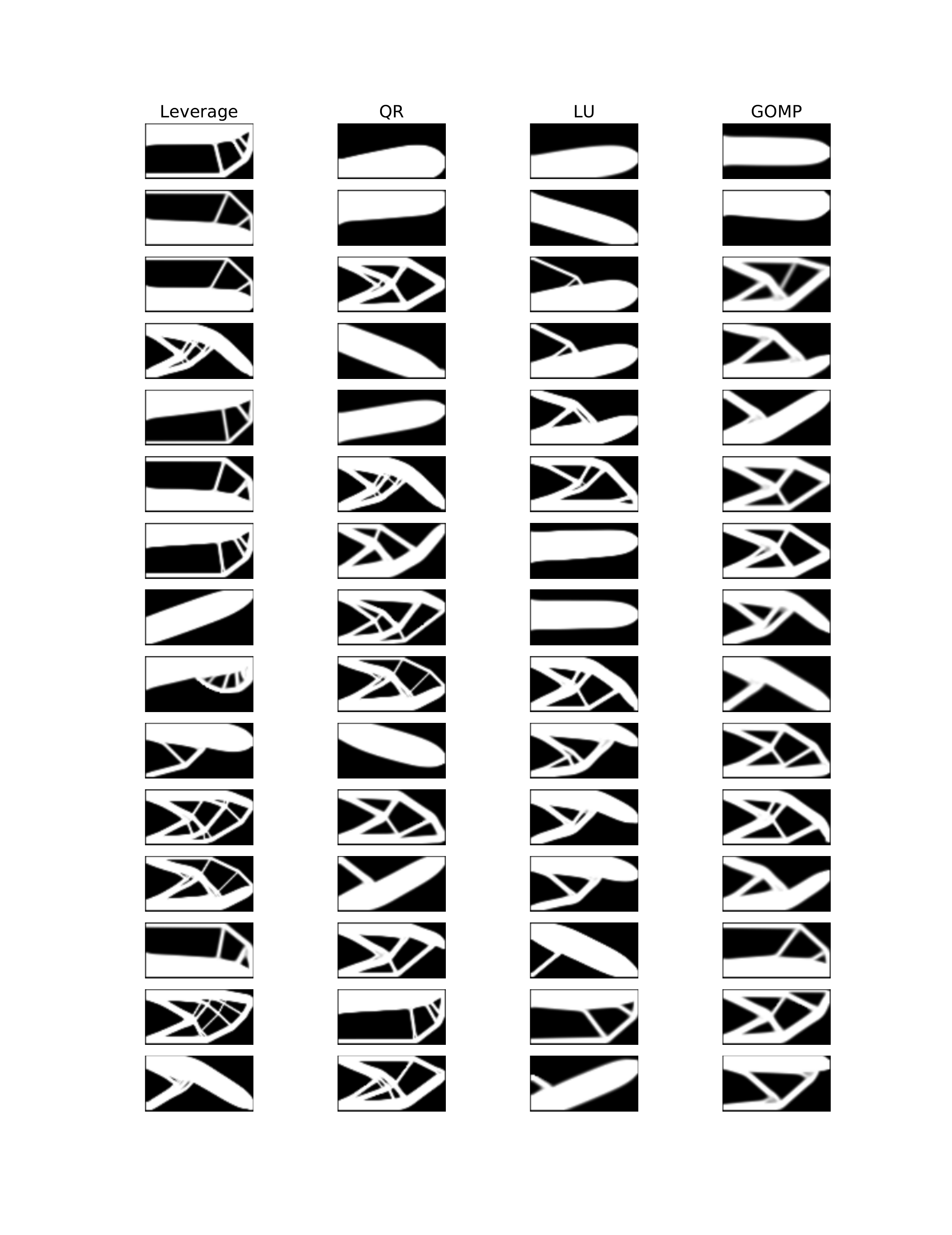} 
 \caption{%
	\textbf{Structure topology optimization samples:} 
    The samples chosen (in order) by each algorithm. Because Cholesky and QR select the same samples, we only show the results for QR in the interest of space.
    Note that all methods select samples with a large primary beam first, and then become quite different afterwards.
    The differences are caused by how the methods make the next selection, for example GOMP uses max correlation with residual while QR/Cholesky use max residual magnitude.
}
\label{fig:topo:samples}
\end{figure}

\section{Conclusion}
We have \eedits{investigated} a novel approach to \eedits{determine a design of experiments} for high-fidelity simulations in a multifidelity \eedits{framework}. \eedits{Low-rank multifidelity techniques require a step that identifies an appropriate allocation of scarce computational resources in a parametric high-fidelity model. 
Existing methods accomplish this allocation by using a particular greedy technique to select a small subset from a large candidate. For the purposes of performing this multifidelity approximation, we explore and compare many existing alternative subset selection techniques that are popular in statistics and machine learning. We find that a GOMP algorithm can consistently yield superior results on challenging datasets.}

\eedits{We provide first-principles motivation of the problem and the choice of GOMP, and present analysis suggesting that GOMP can be effective for this task. Our numerical results show substantial improvement in the multifidelity error per simulation cost. The GOMP algorithm was compared to previous allocation strategies as well as existing column subset selection problem solutions on several simulation datasets. This work thus investigates an unexplored connection between the multifidelity allocation problem and the classic subset selection problem in the machine learning and data mining domains. Our analysis provides a first step to understanding the apparent superiority of the GOMP strategy, but a more rigorous analysis requires further investigation.
}

\section*{Acknowledgments}
We would like to thank Dr. Vahid Keshavarzzadeh for providing the data and assistance for the structure topology optimization example, 
and to thank Mr. Ashok Jallepalli for assistance with generating the fluid simulation example.

\bibliographystyle{siamplain}
\bibliography{main}

\begin{thebibliography}{10}

\bibitem{altschuler2016greedy}
{\sc J.~Altschuler, A.~Bhaskara, A.~Rostamizadeh, M.~Zadimoghaddam, T.~Fu, and
  V.~Mirrokni}, {\em Greedy column subset selection: New bounds and distributed
  algorithms}, ICML, 2016.

\bibitem{andreassen2011efficient}
{\sc E.~Andreassen, A.~Clausen, M.~Schevenels, B.~S. Lazarov, and O.~Sigmund},
  {\em Efficient topology optimization in {MATLAB} using 88 lines of code},
  Structural and Multidisciplinary Optimization, 43 (2011), pp.~1--16.

\bibitem{bach2013sharp}
{\sc F.~Bach}, {\em {Sharp analysis of low-rank kernel matrix approximations}},
  International Conference on Learning Theory, 30 (2013), pp.~185--209,
  \url{https://arxiv.org/abs/1208.2015}.

\bibitem{boutsidis2014near}
{\sc C.~Boutsidis, P.~Drineas, and M.~Magdon-Ismail}, {\em {Near-optimal
  column-based matrix reconstruction}}, SIAM Journal on Computing, 10598
  (2014), pp.~1--27, \url{https://doi.org/10.1137/12086755X},
  \url{http://epubs.siam.org/doi/abs/10.1137/12086755X},
  \url{https://arxiv.org/abs/arXiv:1103.0995v1}.

\bibitem{boutsidis2009improved}
{\sc C.~Boutsidis, M.~W. Mahoney, and P.~Drineas}, {\em {An Improved
  Approximation Algorithm for the Column Subset Selection Problem}},
  Proceedings of the Twentieth Annual ACM-SIAM Symposium on Discrete
  Algorithms,  (2009), pp.~968--977, \url{https://arxiv.org/abs/0812.4293}.

\bibitem{boyd2011alternating}
{\sc S.~Boyd}, {\em {Alternating Direction Method of Multipliers}}, Proceedings
  of the 51st IEEE Conference on Decision and Control, 3 (2011), pp.~1--44,
  \url{http://www.nowpublishers.com/product.aspx?product=MAL{\&}doi=2200000016}.

\bibitem{boyd2011distributed}
{\sc S.~Boyd, N.~Parikh, E.~Chu, B.~Peleato, J.~Eckstein, et~al.}, {\em
  Distributed optimization and statistical learning via the alternating
  direction method of multipliers}, Foundations and Trends{\textregistered} in
  Machine learning, 3 (2011), pp.~1--122.

\bibitem{cantwell2015nektar++}
{\sc C.~D. Cantwell, D.~Moxey, A.~Comerford, A.~Bolis, G.~Rocco, G.~Mengaldo,
  D.~De~Grazia, S.~Yakovlev, J.-E. Lombard, D.~Ekelschot, et~al.}, {\em
  Nektar++: An open-source spectral/hp element framework}, Computer Physics
  Communications, 192 (2015), pp.~205--219.

\bibitem{chan1992some}
{\sc T.~F. Chan and P.~C. Hansen}, {\em {Some applications of the rank
  revealing QR factorization}}, SIAM Journal on Scientific Computing, 13
  (1992), pp.~727--741.

\bibitem{cheng2005on}
{\sc H.~Cheng, Z.~Gimbutas, P.~G. Martinsson, and V.~Rokhlin}, {\em {On the
  Compression of Low Rank Matrices}}, SIAM Journal on Scientific Computing, 26
  (2005), pp.~1389--1404, \url{https://doi.org/10.1137/030602678}.

\bibitem{cohen2015ridge}
{\sc M.~B. Cohen and C.~Musco}, {\em {Ridge Leverage Scores for Low-Rank
  Approximation}},  (2015), \url{https://arxiv.org/abs/1511.07263}.

\bibitem{deng2011group}
{\sc W.~Deng, W.~Yin, and Y.~Zhang}, {\em {Group Sparse Optimization By
  Alternating Direction Method}}, SPIE Optical Engineering + Applications,
  (2013), pp.~88580R--88580R, \url{https://doi.org/10.1117/12.2024410},
  \url{http://circ.ahajournals.org/cgi/content/abstract/95/2/473}.

\bibitem{deng2013group}
{\sc W.~Deng, W.~Yin, and Y.~Zhang}, {\em Group sparse optimization by
  alternating direction method}, in Wavelets and Sparsity XV, vol.~8858,
  International Society for Optics and Photonics, 2013, p.~88580R.

\bibitem{golub1965numerical}
{\sc G.~Golub}, {\em {Numerical methods for solving linear least squares
  problems}}, Numer Math, 7 (1965), pp.~206--216.

\bibitem{golub2012matrix}
{\sc G.~H. Golub and C.~F. Van~Loan}, {\em Matrix computations}, vol.~3, JHU
  Press, 2012.

\bibitem{gu1996efficient}
{\sc M.~Gu and S.~C. Eisenstat}, {\em {Efficient Algorithms for Computing a
  Strong Rank-Revealing QR Factorization}}, SIAM Journal on Scientific
  Computing, 17 (1996), pp.~848--869, \url{https://doi.org/10.1137/0917055},
  \url{http://epubs.siam.org/doi/abs/10.1137/0917055}.

\bibitem{hampton_parametric/stochastic_2017}
{\sc J.~Hampton, H.~Fairbanks, A.~Narayan, and A.~Doostan}, {\em
  Parametric/{Stochastic} {Model} {Reduction}: {Low}-{Rank} {Representation},
  {Non}-{Intrusive} {Bi}-{Fidelity} {Approximation}, and {Convergence}
  {Analysis}}, arXiv:1709.03661 [math.NA],  (2017),
  \url{http://arxiv.org/abs/1709.03661} (accessed 2018-01-15).
\newblock arXiv: 1709.03661.

\bibitem{KaSh05}
{\sc G.~E. Karniadakis and S.~J. Sherwin}, {\em Spectral/hp element methods for
  CFD}, Oxford University Press, 2005.

\bibitem{kesh17}
{\sc V.~Keshavarzzadeh, R.~M. Kirby, and A.~Narayan}, {\em Convergence
  acceleration for time-dependent parametric multifidelity models}, Submitted,
  2018.

\bibitem{le2013multi}
{\sc L.~Le~Gratiet}, {\em Multi-fidelity {G}aussian process regression for
  computer experiments}, PhD thesis, Universit{\'e} Paris-Diderot-Paris VII,
  2013.

\bibitem{lozano2009group}
{\sc C.~Lozano, G.~Swirszcz, and N.~Abe}, {\em {Group Orthogonal Matching
  Pursuit for Variable Selection and Prediction}}, Nips2009,  (2009), pp.~1--9.

\bibitem{mahoney2009CUR}
{\sc M.~W. Mahoney and P.~Drineas}, {\em {CUR matrix decompositions for
  improved data analysis.}}, Proceedings of the National Academy of Sciences of
  the United States of America, 106 (2009), pp.~697--702,
  \url{https://doi.org/10.1073/pnas.0803205106}.

\bibitem{munson1990fundamentals}
{\sc B.~R. Munson, D.~F. Young, and T.~H. Okiishi}, {\em Fundamentals of fluid
  mechanics}, New York, 3 (1990).

\bibitem{narayan2013stochastic}
{\sc A.~Narayan, C.~Gittelson, and D.~Xiu}, {\em {A stochastic collocation
  algorithm with multifidelity models}}, SIAM Journal of Computing, 51 (2013),
  pp.~2005--2035, \url{https://doi.org/10.1137/130929461}.

\bibitem{ng_multifidelity_2012}
{\sc L.~W.-T. Ng and M.~Eldred}, {\em Multifidelity {Uncertainty}
  {Quantification} {Using} {Non}-{Intrusive} {Polynomial} {Chaos} and
  {Stochastic} {Collocation}}, American Institute of Aeronautics and
  Astronautics, Apr. 2012, \url{https://doi.org/10.2514/6.2012-1852},
  \url{http://arc.aiaa.org/doi/abs/10.2514/6.2012-1852} (accessed 2013-07-16).

\bibitem{papailiopoulos2014provable}
{\sc D.~Papailiopoulos, A.~Kyrillidis, and C.~Boutsidis}, {\em {Provable
  deterministic leverage score sampling}}, in Proceedings of the 20th ACM
  SIGKDD international conference on Knowledge discovery and data mining - KDD
  '14, 2014, pp.~997--1006, \url{https://doi.org/10.1145/2623330.2623698},
  \url{http://dl.acm.org/citation.cfm?doid=2623330.2623698},
  \url{https://arxiv.org/abs/1404.1530}.

\bibitem{peherstorfer_optimal_2016}
{\sc B.~Peherstorfer, K.~Willcox, and M.~Gunzburger}, {\em Optimal {Model}
  {Management} for {Multifidelity} {Monte} {Carlo} {Estimation}}, SIAM Journal
  on Scientific Computing, 38 (2016), pp.~A3163--A3194,
  \url{https://doi.org/10.1137/15M1046472}.

\bibitem{perdikaris2017nonlinear}
{\sc P.~Perdikaris, M.~Raissi, A.~Damianou, N.~Lawrence, and G.~Karniadakis},
  {\em Nonlinear information fusion algorithms for data-efficient
  multi-fidelity modelling}, in Proc. R. Soc. A, vol.~473, The Royal Society,
  2017, p.~20160751.

\bibitem{perdikaris2015multi}
{\sc P.~Perdikaris, D.~Venturi, J.~O. Royset, and G.~E. Karniadakis}, {\em
  {Multi-fidelity modelling via recursive co-kriging and Gaussian–Markov
  random fields}}, Proceedings of the Royal Society A: Mathematical, Physical
  and Engineering Science, 471 (2015), p.~20150018,
  \url{https://doi.org/10.1098/rspa.2015.0018},
  \url{http://rspa.royalsocietypublishing.org/lookup/doi/10.1098/rspa.2015.0018}.

\bibitem{perry2016augmented}
{\sc D.~J. Perry and R.~T. Whitaker}, {\em Augmented leverage score sampling
  with bounds}, in Joint European Conference on Machine Learning and Knowledge
  Discovery in Databases, Springer, 2016, pp.~543--558.

\bibitem{reist2010topology}
{\sc T.~A. Reist, J.~Andrysek, and W.~L. Cleghorn}, {\em Topology optimization
  of an injection moldable prosthetic knee joint}, Computer-Aided Design and
  Applications, 7 (2010), pp.~247--256.

\bibitem{shitov2017column}
{\sc Y.~Shitov}, {\em Column subset selection is np-complete}, arXiv preprint
  arXiv:1701.02764,  (2017).

\bibitem{sutradhar2010topological}
{\sc A.~Sutradhar, G.~H. Paulino, M.~J. Miller, and T.~H. Nguyen}, {\em
  Topological optimization for designing patient-specific large craniofacial
  segmental bone replacements}, Proceedings of the National Academy of
  Sciences, 107 (2010), pp.~13222--13227.

\bibitem{ulu2016data}
{\sc E.~Ulu, R.~Zhang, and L.~B. Kara}, {\em A data-driven investigation and
  estimation of optimal topologies under variable loading configurations},
  Computer Methods in Biomechanics and Biomedical Engineering: Imaging \&
  Visualization, 4 (2016), pp.~61--72.

\bibitem{xiu2004supersensitivity}
{\sc D.~Xiu and G.~E. Karniadakis}, {\em Supersensitivity due to uncertain
  boundary conditions}, International journal for numerical methods in
  engineering, 61 (2004), pp.~2114--2138.

\bibitem{yang2015fast}
{\sc Y.~Yang and H.~Zou}, {\em A fast unified algorithm for solving group-lasso
  penalize learning problems}, Statistics and Computing, 25 (2015),
  pp.~1129--1141.

\bibitem{yuan2006model}
{\sc M.~Yuan and Y.~Lin}, {\em {Model selection and estimation in regression
  with grouped variables}}, Journal of the Royal Statistical Society: Series B
  (Statistical Methodology), 68 (2006), pp.~49--67.

\bibitem{zhu2016topology}
{\sc J.-H. Zhu, W.-H. Zhang, and L.~Xia}, {\em Topology optimization in
  aircraft and aerospace structures design}, Archives of Computational Methods
  in Engineering, 23 (2016), pp.~595--622.

\bibitem{zhu2014computational}
{\sc X.~Zhu, A.~Narayan, and D.~Xiu}, {\em {Computational Aspects of Stochastic
  Collocation with Multifidelity Models}}, SIAM/ASA Journal on Uncertainty
  Quantification, 2 (2014), pp.~444--463,
  \url{https://doi.org/10.1137/130949154},
  \url{http://epubs.siam.org/doi/abs/10.1137/130949154}.

\end{thebibliography}
\end{document}